\newcommand{\bea}{\begin{eqnarray*}}
\newcommand{\eea}{\end{eqnarray*}}
\newcommand{\zz}[1]{}
\newtheorem{theorem}{Theorem}[section]
\newtheorem{proposition}[theorem]{Proposition}
\newtheorem{corollary}[theorem]{Corollary}
\newtheorem{remark}[theorem]{Remark}
\newtheorem{lemma}[theorem]{Lemma}
\newtheorem{example}[theorem]{Example}
\newcommand{\bz}{\mathbb{Z}}
\newcommand{\UU}{\mathcal U}
 \newcommand{\NN}{{\mathbb{N}}}
 \newcommand{\ZZ}{{\mathbb{Z}}}
 \newcommand{\RR}{{\mathbb{R}}}
 \newcommand{\CC}{{\mathbb{C}}}
 \newcommand{\HH}{{\mathbb{H}}}
\newcommand{\sct}{{\rm sct}}
\newcommand{\ct}{{\rm ct}}
\newcommand{\cat}{{\rm cat}}
\newcommand{\hdim}{{\rm hdim}}
\newcommand{\rank}{{\rm rank}}
\begin{document}
\thanks{$^{**}$ Supported by the Research Grant NCN Grant  2015/19/B/ST1/01458 }
\thanks{$^{***}$ Supported by the Slovenian Research Agency program P1-0292 and grant J1-7025}
\title[Estimates of covering type]{Estimates of covering type and the number of vertices of 
minimal triangulations}

\author[Dejan Govc]{Dejan Govc$^*$}
\author[Wac{\l}aw Marzantowicz]{Wac{\l}aw Marzantowicz$^{**}$}
\author[Petar Pave\v{s}i\'{c}]{Petar Pave\v si\'{c}$^{***}$}

\address{$^*$, $^{***}$ Faculty of Mathematics and Physics, University of Ljubljana,
Jadranska 21,  1000 Ljubljana, Slovenija}
\email{dejan.govc@gmail.com, petar.pavesic@fmf.uni-lj.si}
\address{$^{**}$ \;Faculty of Mathematics and Computer Science, Adam Mickiewicz University 
of Pozna{\'n}, ul. Umultowska 87, 61-614 Pozna{\'n}, Poland.}
 \email{marzan@amu.edu.pl}

\subjclass[2010]{Primary 55M;  Secondary 55M30, 57Q15, 57R05   } 
\keywords{covering type, minimal triangulation, Lusternik-Schnirelmann category, cup-length}
\maketitle

\begin{abstract} 
The \emph{covering type} of a space $X$ is a numerical homotopy invariant that in some sense 
measures the homotopical size of $X$. It was first introduced by Karoubi and Weibel 
\cite{K-W} as the minimal cardinality of a good cover 
of a space $Y$ taken among all spaces $Y$ that are homotopy equivalent to $X$. 
In this paper we give several estimates of the covering type in terms of other 
homotopy invariants of $X$, most notably the ranks of the homology groups of $X$, 
the multiplicative structure of the cohomology ring of $X$ and the Lusternik-Schnirelmann category 
of $X$. In addition, we relate the covering type of a triangulable space to the number of vertices 
in its minimal triangulations. In this way we derive within a unified framework several 
estimates of vertex-minimal triangulations which are either new or extensions of results that 
have been previously obtained by ad hoc combinatorial arguments. 
\end{abstract}


\section{Introduction}

Many concepts in topology and homotopy theory are related to the size and the structure of 
the covers that a given space admits. Typical examples that spring in mind 
are Lebesgue dimension and Lusternik-Schnirelmann category. M. Karoubi and C. Weibel \cite{K-W} 
have recently 
introduced another interesting measure for the complexity of a space based on the size of its good
covers. 

Recall that an open  cover $\mathcal{U}$ of $X$ is said to be a \emph{good cover} if all elements
of $\mathcal{U}$ and all their  
non-empty finite intersections are contractible. 
Karoubi and Weibel defined $\sct(X)$, the \emph{strict covering type} of a given space $X$, 
as the minimal cardinality
of a good cover for $X$. Note that $\sct(X)$ can be infinite (e.g., if $X$ is an infinite 
discrete space) or even undefined, if the space
does not admit any good covers (e.g. the Hawaiian earring). In what follows we will always 
tacitly assume that the spaces under consideration
admit finite good covers. 

Strict covering type is a geometric notion and is not homotopy invariant, which led  
Karoubi and Weibel to define the \emph{covering type}
of $X$ as the minimal size of a good cover of spaces that are homotopy equivalent to $X$:
$$\ct(X):=\min\{\sct(Y)\mid Y\simeq X\}.$$
The covering type is a homotopy invariant of the space and is often strictly smaller than the 
strict covering type, even for 
simple spaces like wedges of circles (\cite[Example 1.3]{K-W}). 
Karoubi and Weibel also proved a useful result (\cite[Theorem 2.5]{K-W}) that the covering type 
of a finite CW complex is equal
to the minimal cardinality of a good \emph{closed} cover of some CW complex that is 
homotopy equivalent to $X$. Furthermore, they computed exactly the covering type for finite 
graphs (i.e., finite wedges of circles) 
and for some closed surfaces 
(the sphere, torus, projective space), while giving estimates for the covering type of other surfaces.
Finally, they estimated the covering type of mapping cones, suspensions and covering spaces. 
With our methods we have been able to refine and improve their results, and even to 
correct the upper estimate of $\ct(\RR P^m)$ in \cite[Example 7.3.]{K-W} (cf. Corollary 
\ref{cor:ct projective spaces}).

Good covers arise naturally in many situations, e.g., as geodesically convex neighbourhoods in Riemannian manifolds or as locally convex covers of polyhedra.
Their main feature is that the pattern of intersections of sets of a good cover capture the homotopy type of a space. Specifically, let $N(\UU)$ denote the \emph{nerve} of the open 
cover  $\UU$ of $X$, and let $|N(\UU)|$ be its geometric realization. We may identify the vertices of $|N(\UU)|$ with the elements of $\UU$ and the points of 
$|N(\UU)|$ with the convex combinations of elements of $\UU$.
If $\UU$ is \emph{numerable}, that is, if $\UU$ admits a subordinated partition of unity $\{\varphi_{_U}\colon X\to [0,1]\mid U\in\UU\}$, 
then the formula
$$\varphi(x):=\sum_{U\in\UU} \varphi_{_U}(x)\cdot U$$
determines the so called \emph{Aleksandroff map} $\varphi\colon X\to |N(\UU)|$, which has many remarkable properties.
In particular we have the following classical result, whose discovery is variously attributed to J.~Leray, K.~Borsuk and A.~Weil 
(see \cite[Corollary 4G.3]{Hatcher} for a modern proof).

\begin{theorem}[Nerve Theorem] 
\label{thm:Nerve}
If  $\UU$ is a numerable good cover of $X$, then the Aleksandroff map $\varphi: X \to |N(\UU)|$ is a homotopy equivalence.
\end{theorem}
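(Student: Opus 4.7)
The plan is to sandwich both $X$ and $|N(\UU)|$ between a common intermediate space built as a bar construction (equivalently, a homotopy colimit). For each simplex $\sigma = \{U_{i_0},\dots,U_{i_k}\}$ of the nerve write $U_\sigma := U_{i_0}\cap\cdots\cap U_{i_k}$, and let $\Delta^\sigma$ denote the corresponding geometric simplex inside $|N(\UU)|$. Assemble
\[
Y := \Bigl(\coprod_{\sigma} U_\sigma \times \Delta^\sigma\Bigr)\Big/\sim ,
\]
with the evident identifications: whenever $\tau$ is a face of $\sigma$ the face inclusion $\Delta^\tau\hookrightarrow\Delta^\sigma$ and the inclusion $U_\sigma\hookrightarrow U_\tau$ are required to commute in the obvious way. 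There are two natural maps, $p\colon Y\to X$ which forgets the simplex coordinate, and $q\colon Y\to|N(\UU)|$ which forgets the space coordinate. The strategy is to show each is a homotopy equivalence, and that the Aleksandroff map factors as $\varphi = q\circ s$ for a section $s$ of $p$ built from the partition of unity.

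To treat $q$, invoke the good-cover hypothesis: the natural transformation from the diagram $\sigma\mapsto U_\sigma$ to the constant point-valued diagram is a levelwise homotopy equivalence, and the bar construction carries levelwise equivalences of sufficiently free diagrams to homotopy equivalences. Thus $q$ is a homotopy equivalence onto the bar construction of the constant point diagram, which is precisely $|N(\UU)|$.

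For $p$, the numerability hypothesis is essential. Using the subordinated partition of unity $\{\varphi_U\}$, define a section $s\colon X\to Y$ by
\[
s(x) := \Bigl(x,\ \sum_{U\ni x}\varphi_U(x)\cdot U\Bigr),
\]
where the second coordinate lies in $\Delta^{\sigma(x)}$ for $\sigma(x):=\{U\in\UU\mid \varphi_U(x)>0\}$; this is a well-defined simplex of $N(\UU)$ because $x\in\bigcap_{U\in\sigma(x)} U=U_{\sigma(x)}$. Then $p\circ s = \mathrm{id}_X$ is immediate, and $s\circ p\simeq\mathrm{id}_Y$ is obtained by a straight-line deformation inside each convex simplex factor, which is legitimate because on each piece $U_\sigma\times\Delta^\sigma$ the support restrictions of the $\varphi_U$ agree coherently with the gluings. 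Alternatively, Dold's local-to-global theorem applies directly: $p$ is a homotopy equivalence over each member of the numerable cover $\{U_i\}$, hence a global one. Tracing $q\circ s$ through the definitions recovers the formula $x\mapsto \sum_U\varphi_U(x)\cdot U$, identifying the resulting composite equivalence with the Aleksandroff map.

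The principal obstacle is showing $p$ is a homotopy equivalence: the collapse argument for $q$ is largely formal once the bar-construction framework is in place, whereas $p$ genuinely requires the partition of unity in order to glue the local inclusions $U_\sigma\hookrightarrow X$ into coherent global homotopy data. The two hypotheses of the theorem—contractibility of intersections and numerability of the cover—enter at precisely these two distinct steps, and neither can be dropped.
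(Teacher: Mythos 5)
The paper offers no proof of this statement---it is the classical Nerve Theorem, quoted with a reference to \cite[Corollary 4G.3]{Hatcher}---and your argument is exactly the standard proof given in that source (and in Segal's treatment): the blowup $Y=\bigl(\coprod_\sigma U_\sigma\times\Delta^\sigma\bigr)/\sim$ with its two projections, where goodness makes $q$ an equivalence, numerability yields the partition-of-unity section $s$ of $p$ (or the appeal to Dold's local-to-global theorem), and the composite $q\circ s$ is the Aleksandroff map. Your sketch is correct as it stands; the only details left tacit---that $\sigma\cup\sigma(x)$ is a simplex of the nerve because $x\in U_\sigma\cap U_{\sigma(x)}$, which legitimizes the straight-line homotopy, and that $p^{-1}(U)$ deformation retracts fiberwise onto $U\times\{U\}$ so that Dold's criterion indeed applies---are standard and fill in exactly as you indicate.
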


As a consequence, a paracompact space admits a finite good cover if, and only if it is 
homotopy equivalent 
to a finite (simplicial or CW) complex. In the literature one can find many variants of the 
Nerve theorem, which under different 
sets of assumptions show that the Alexandroff map is a homotopy equivalence, or a weak
homotopy equivalence, or a homology equivalence, etc.

The idea of covering type provides an important link between good covers and minimal triangulations.
In general, 
given a polyhedron $P$, one often looks for triangulations of $P$ with the minimal number of vertices.
Again, there are many variants and aspects of the problem so we 
introduce the following systematic notation: given a compact polyhedron $P$ we denote
by $\Delta(P)$ the minimal number of vertices in a triangulation of $P$, i.e.,
$$ \Delta(P): = \min\big\{\mathrm{card}(K^{(0)})\big|\, |K|\approx P\big\}.$$
If $P$ is a manifold then one is principally  interested in combinatorial (or PL-) 
triangulations, i.e., triangulations in which the links of vertices are combinatorial spheres. 
Thus for every PL-manifold $M$ we define
$$\Delta^{PL}(M):= \min\, \big\{\mathrm{card}(K^{(0)})\big|\, K \,{\text{is a}} \; 
{\text{PL-triangulation of }} \; M \big\}\,.$$
Computing $\Delta$ or $\Delta^{PL}$ is a hard and intensively studied problem of combinatorial
 topology - see Datta \cite{Datta}
and Lutz \cite{Lutz} for surveys of the vast body of work related to this question.

Every triangulation of a space $X$ gives rise to a  good cover of $X$ by taking open stars of 
vertices in the triangulation. 
On the other hand, if $\UU$ is a good cover of $X$, then $|N(\UU)|$ is homotopy equivalent to $X$ 
by the Nerve Theorem. Therefore, whenever $X$ has the homotopy type of compact polyhedron we may 
introduce a homotopy analogue of $\Delta(P)$ as
$$\Delta^{\simeq}(X): = \min\{\Delta (P)\mid P \simeq X \}\,.$$
Clearly, $\Delta^{\simeq}(K)$ is a lower bound for other invariants, since 
$$\Delta^{\simeq}(P)\le \Delta(P),$$ 
and if $M$ is a PL-manifold
$$\Delta^{\simeq}(M)\le \Delta^{PL}(M).$$ 

On the other hand $\Delta^\simeq(X)$ is directly related to the covering type.

\begin{theorem}
\label{thm:min tgl}
If $X$ has the homotopy type of a finite polyhedron, then
$$  \ct(X)=\Delta^\simeq (X)$$
\end{theorem}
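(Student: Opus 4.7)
The plan is to prove the equality by establishing both inequalities, each via a direct construction that converts a triangulation into a good cover and vice versa.

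For the inequality $\ct(X) \le \Delta^\simeq(X)$, I would start with an arbitrary compact polyhedron $P \simeq X$ together with a vertex-minimal triangulation $K$ of $P$, so that $|K| \approx P$ and $K$ has $\Delta(P)$ vertices. The natural open cover to look at is the family of open stars $\UU = \{\mathrm{st}(v) \mid v \in K^{(0)}\}$. I would verify that this is a good cover: each open star is an open, contractible neighborhood of its vertex, and a nonempty finite intersection $\mathrm{st}(v_0)\cap \cdots \cap \mathrm{st}(v_k)$ is precisely the open star of the simplex $\langle v_0,\ldots,v_k\rangle$ when this simplex lies in $K$ (and empty otherwise), which is again contractible. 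Hence $\sct(P)\le \Delta(P)$, and taking the minimum over polyhedra $P\simeq X$ yields $\ct(X)\le \Delta^\simeq(X)$.

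For the reverse inequality $\Delta^\simeq(X)\le \ct(X)$, I would take a space $Y\simeq X$ realizing the covering type, i.e., carrying a good cover $\UU$ with $|\UU|=\ct(X)$. Since we tacitly assume all spaces admit finite good covers and are paracompact, the finite cover $\UU$ admits a subordinate partition of unity, so $\UU$ is numerable and the Nerve Theorem (Theorem \ref{thm:Nerve}) applies to give a homotopy equivalence $Y\simeq |N(\UU)|$. The geometric realization $|N(\UU)|$ is a finite simplicial complex, whose vertex set is in bijection with $\UU$, so it has exactly $\ct(X)$ vertices. Writing $P:=|N(\UU)|$, we obtain a polyhedron $P \simeq X$ with a triangulation on $\ct(X)$ vertices, hence $\Delta^\simeq(X)\le \Delta(P)\le \ct(X)$.

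Combining the two inequalities proves the theorem. The steps are mostly routine once one has the Nerve Theorem in hand; the only point deserving care is verifying that open stars form a good cover (in particular, that intersections are contractible rather than merely nonempty). The other subtlety is the numerability hypothesis of the Nerve Theorem, which is guaranteed here because we restrict to spaces admitting \emph{finite} good covers in a paracompact setting, so one can construct a partition of unity subordinated to $\UU$ without difficulty.
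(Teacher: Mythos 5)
Your proof is correct and follows essentially the same route as the paper: one direction via the open-star cover of a vertex-minimal triangulation, the other via the Nerve Theorem applied to a minimal good cover, whose nerve is a complex on $\ct(X)$ vertices. Your added verifications (that intersections of open stars are open stars of simplices, hence contractible, and that finiteness guarantees numerability) are sound details the paper leaves implicit.
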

\begin{proof} 
Let $\mathcal{U}$ be a good cover of $X$ of cardinality $\ct(X)$.
The nerve $N(\mathcal{U})$ has $\ct(X)$ vertices, and  $|N(\UU)|\simeq X$ by the Nerve theorem,
which implies $\Delta^\simeq(X)\,\leq \,  \ct(X)\,.$

Conversely, if $K$ is a simplicial complex  such that $|K|\simeq X$, 
then the cover of $|K|$ by open stars of vertices is a good cover of $|K|$, therefore
$ \Delta^\simeq(X)\, \geq \,\ct(X) \,.$
\end{proof}

As a consequence, it is of great interest to find good lower estimates for 
$\ct(X)$ as they in turn give lower bounds for the size of minimal triangulations. 
On the other hand, it is easy to find examples where $\ct(P)$ is strictly smaller than 
$\Delta(P)$ (cf. \cite[Example 1.3]{K-W}).
The upper estimates for $\ct(X)$ are therefore less relevant as a tool for the study of
minimal triangulations. Indeed, upper estimates 
of $\ct(X)$ are usually obtained by finding explicit triangulations of $X$, while the lower estimates
are based on certain obstructions. The latter is a natural setting for the methods of homotopy theory,
and is one of the reasons why the relation with $\ct(X)$  is so useful. 

It appears to be much harder to distinguish between $\ct(M)$ and $\Delta^{PL}(M)$ when $M$ 
is a closed manifold. 
Borghini and Minian \cite{B-M} have recently proved that with one exception 
$\ct(M)=\Delta^{PL}(M)$ for all closed surfaces (both orientable and non-orientable). The only
exception is the genus two orientable surface $T\# T$, as they prove 
that $\ct(T\# T)=9$ while $\Delta^{PL}(T\# T)=10$, 
which means that the minimal triangulation of $T\# T$ has 10 vertices but there exists 
a 2-dimensional complex on 9 vertices whose geometric realization is homotopy equivalent 
to $T\# T$. Since the covering type is a homotopy invariant, it would be of interest to find 
conditions under which the covering type of a closed manifold $M$ coincides with the minimal number 
of vertices in a triangulation of $M$.

The paper is organized as follows. In the next two sections we relate the Lusternik-Schnirelmann category and the cohomology ring
of a space to the covering type and derive a series of lower estimates for the covering type. In the last section we study the effect that 
suspensions and wedge-sums have on the covering type, and give some useful upper and lower estimates for the covering type of a Moore space.


\section{LS-category estimates}

Recall the definition of the Lusternik-Schnirelmann (LS-)category of a space $X$. A subset $A\subseteq X$ is \emph{categorical} if the inclusion 
$A\hookrightarrow X$ is homotopic to the constant map. Then \emph{LS-category} of $X$, denoted $\cat(X)$, is the minimal $n$, for which $X$ can be covered 
by $n$ open categorical subsets. A standard reference is \cite{CLOT}

\begin{remark}
Contractible subsets of $X$ are clearly categorical, but the converse is not true - e.g., the sphere
is a categorical subset of the ball. There is a related 
concept called \emph{geometric category}, defined as  the minimal cardinality of a cover of $X$ 
by open contractible sets (see \cite[Chapter 3]{CLOT}). 
Like the strict covering type, the geometric category is not a homotopy invariant of $X$, so 
one defines the \emph{strong category}, $\mathrm{Cat}(X)$,
as the minimum of geometric categories of spaces that are homotopy equivalent to $X$. Although 
the categorical sets may be very different from contractible
ones, the following remarkable relation holds: $\cat(X)\le\mathrm{Cat}(X)\le\cat(X)+1$ 
(see \cite[Proposition 3.15]{CLOT}). 

Little is known about analogous relationships between the covering type and the strict covering type.
For the wedge $W_n$ on $n$ circles  Karoubi and Weibel \cite[Proposition 4.1]{K-W} show that 
$\sct(W_n)=n+2$, while $\ct(W_n)=\left\lceil \frac{3+\sqrt{1+8n}}{2} \right\rceil$, which means that
the difference between the two can be arbitrarily large. Furthermore, as we mentioned before, 
Borghini and Minian \cite[Proposition 3.4]{B-M} found an example of a closed surface for which the 
covering type is one less than its strict covering type.
\end{remark}

The relation between the category and the covering type of a space is also complicated. 
For spheres $\cat(S^n)=2$ while $\ct(S^n)=n+2$. 
Neither of them determines the other. We will give below examples of spaces that have the same
covering type and yet the difference between
respective categories is as big as we want. Nevertheless, if the category of a space is $n>1$, then
its (homotopy) dimension is at least $n-1$ and so its covering
type is at least $n+1$ (because it is not contractible). Roughly speaking, spaces with big category
cannot have small covering type. We are going to make this statement more precise in the rest of this
section. 

Let us begin with a list of facts on which we will base the proofs of our results.
\begin{itemize}
\item[A)]By the Nerve theorem, if $X$ admits a good cover $\mathcal{U}$ of order $\leq n$ (i.e., 
at most $n$ different sets have non-empty intersection), then $X$ is
homotopy equivalent to a simplicial complex of dimension $n-1$.
\item[B)]By the Nerve theorem, if $U_1, \dots\,, U_n$ are elements of a good cover that intersect 
non-trivially, then $U_1\cup\,\cdots\, \, \cup U_n$ is 
homotopy equivalent to $\Delta_{n-1}$, and therefore contractible.
\item[C)] If $\cat(X) \geq n$, $X=U\cup V$, where $U, \,V$ open and $U$ is contractible 
(or more generally, $U$ is categorical in $X$), then
$\cat(V)\geq n-1$. This is obvious, because $\cat(V)<n-1$ would imply $\cat(U\cup V)<n$. 
\item[D)] $\cat(X)\leq \hdim(X) +1$, where $\hdim(X)$ is the \emph{homotopy dimension} of $X$, 
defined as 
$\hdim(X):=\min \big\{ \dim (Y) \mid Y\simeq X,\ Y\ \text{CW-complex}\big\}.$
The claim follows from the classical estimate $ \cat (Y)\leq \dim Y+1$ and the homotopy
invariance of LS-category.
\end{itemize}

\begin{theorem}
\label{thm:cat}
$$ \ct (X) \,\geq\, \frac{1}{2} \, \cat(X) \, (\cat(X) +1) $$
\end{theorem}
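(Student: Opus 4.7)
The plan is to argue by induction on $n = \ct(X)$. For the base case $n = 1$, the space $X$ is contractible, whence $\cat(X) = 1$ and the inequality $1 \geq \tfrac{1}{2}\cdot 1 \cdot 2$ holds trivially.

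For the inductive step, set $c := \cat(X)$ and assume $c \geq 2$. Choose a representative $Y \simeq X$ admitting a good cover $\mathcal{U} = \{U_1,\ldots,U_n\}$ with $n = \ct(X)$. The first step is to locate $c$ members of $\mathcal{U}$ with nonempty common intersection. By the Nerve Theorem $Y \simeq |N(\mathcal{U})|$, so fact (D) together with the homotopy invariance of $\cat$ gives $c = \cat(Y) \leq \dim N(\mathcal{U}) + 1$; in particular the nerve contains a simplex of dimension at least $c - 1$. After reindexing we may assume $U_1 \cap \cdots \cap U_c \neq \emptyset$, and fact (B) then guarantees that $U := U_1 \cup \cdots \cup U_c$ is contractible, so in particular categorical in $Y$.

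Now set $V := U_{c+1} \cup \cdots \cup U_n$, which is open in $Y$, and note that $Y = U \cup V$. Fact (C) yields $\cat(V) \geq c - 1$. Since the intersections among the sets $U_i$ with $i > c$ are unchanged upon restriction to $V$, the family $\{U_{c+1}, \ldots, U_n\}$ is a good cover of $V$ of cardinality $n - c$, giving $\ct(V) \leq n - c < n$. The inductive hypothesis applied to $V$, combined with the monotonicity of $x \mapsto \tfrac{1}{2}x(x+1)$ for $x \geq 0$, yields
$$\ct(V) \;\geq\; \tfrac{1}{2}\cat(V)(\cat(V) + 1) \;\geq\; \tfrac{1}{2}(c-1)c.$$
Combining these estimates gives $n - c \geq \tfrac{1}{2}(c-1)c$, i.e.\ $n \geq \tfrac{1}{2}c(c+1)$, as required.

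The key idea is that extracting \emph{exactly} $c = \cat(X)$ elements of $\mathcal{U}$ with nonempty common intersection removes $c$ sets from the cover while dropping the LS-category by only one, producing the arithmetic identity $\tfrac{1}{2}c(c+1) = c + \tfrac{1}{2}(c-1)c$ that closes the induction. The main (admittedly mild) obstacle is guaranteeing the existence of this top-dimensional simplex in the nerve, which depends on combining fact (D) with the Nerve Theorem; a secondary point to verify is that restricting the cover to $V$ preserves the good-cover property so that the inductive hypothesis can be applied.
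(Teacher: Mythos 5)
Your proof is correct and follows essentially the same route as the paper: the same decomposition $X=U\cup V$ with $U$ a contractible union of $\cat(X)$ cover elements sharing a common point (via facts A/B/D) and the same category drop $\cat(V)\geq\cat(X)-1$ (fact C), yielding the identity $\frac{1}{2}c(c+1)=c+\frac{1}{2}(c-1)c$. The only difference is bookkeeping: you induct on $\ct(X)$ rather than on $\cat(X)$ as the paper does, which is a harmless (indeed slightly cleaner) variant, since restricting the cover to $V$ visibly lowers the covering type, so the inductive hypothesis applies to $V$ regardless of its exact category.
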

\begin{proof} Assume that $X$ has a good cover  $\mathcal{U}$ of
cardinality $\ct(X)$. We proceed by induction. If $\cat(X) =1 $ then
$X$ is contractible, thus $\ct(X)=1$ and the inequality reduces to
$1\geq 1$.

Assume that the estimate holds for spaces with category $\leq n$, and let $\cat(X) = n+1$. 
Then D) implies $\hdim (X) \geq n$, so by A) there exist sets $U_1,\ldots, U_{n+1} \in \mathcal{U} $ which intersect non-trivially. 
Let $U:= U_1\cup\ldots \cup\, U_{n+1}$ and let $V$ be the union of remaining  elements of $\mathcal{U}$. Then $U$ is contractible by B), which gives $\cat(V)\geq n$ by C). 
We use the induction assumption to compute 
$$\ct(X) \geq (n+1)+\frac{1}{2}\, n(n+1) = \frac{1}{2} \,
(n+1)(n+2)\, .$$
\end{proof}

Direct application of the theorem gives the following estimates. For spheres $\cat(S^n)=2$, therefore 
$\ct(S^n)\ge 3$, and for surfaces (with the exception of $S^2$) $\cat(P)=3$, therefore  
$\ct(P)\ge 6$. Furthermore, for real and complex projective spaces 
$\cat(\RR P^n )=\cat(\CC P^n )=n+1$, so that 
$\ct(\RR P^n) \geq \frac{(n+1)(n+2)}{2}$ and $\ct(\CC P^n) \geq \frac{(n+1)(n+2)}{2}$.

A comparison with the results of \cite{K-W} shows that some of the above estimates are not optimal and can be improved. In fact, we neglected the information about 
the dimension and connectivity of $X$, which also have an impact on the covering type. By taking these data into account we obtain much better estimates (except for real projective spaces, 
which are only $0$-connected and the category is directly related to the dimension). Nevertheless it is interesting to observe that the covering type increases (at least) 
quadratically with the category of the space.

\begin{theorem}
\label{thm:cat+dim} 
$$\ct(X) \geq 1 +\hdim(X) + \frac{1}{2}\,\cat(X)(\cat(X)-1)$$
\end{theorem}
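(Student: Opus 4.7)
The plan is to strengthen the inductive argument of Theorem~\ref{thm:cat} by extracting, from any good cover, a top-dimensional simplex in its nerve: the homotopy dimension $d:=\hdim(X)$ forces this, not merely $\cat(X)$. No fresh induction is needed; Theorem~\ref{thm:cat} serves as the external lemma.

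After disposing of the trivial case $\cat(X)=1$ (both sides equal $1$, using $\hdim(X)=0$), I would assume $\cat(X)=n+1\geq 2$, so by fact~D) we have $d\geq n$. Pick a good cover $\mathcal{U}$ of $X$ with $|\mathcal{U}|=\ct(X)$. Since $X\simeq |N(\mathcal{U})|$ by the Nerve Theorem, the nerve must have dimension at least $d$, and so $\mathcal{U}$ contains $d+1$ elements $U_1,\ldots,U_{d+1}$ with non-empty common intersection. Fact~B) makes $U:=U_1\cup\cdots\cup U_{d+1}$ contractible, and setting $V$ to be the union of the remaining $\ct(X)-(d+1)$ sets of $\mathcal{U}$, fact~C) yields $\cat(V)\geq n$ (in particular $V\neq\emptyset$).

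These remaining sets form a good cover of $V$, so $\ct(V)\leq \ct(X)-(d+1)$. Applying Theorem~\ref{thm:cat} to $V$ and using monotonicity of $m\mapsto \tfrac{1}{2}m(m+1)$,
$$\ct(X)-(d+1)\,\geq\,\ct(V)\,\geq\,\tfrac{1}{2}\cat(V)(\cat(V)+1)\,\geq\,\tfrac{1}{2}n(n+1),$$
which rearranges, upon substituting $n=\cat(X)-1$ and $d=\hdim(X)$, to the stated inequality. The only point requiring any care is the appeal to the Nerve Theorem to extract a $(d+1)$-fold intersection rather than the weaker $(n+1)$-fold intersection used in Theorem~\ref{thm:cat}; this is exactly the refinement that upgrades the bound by $\hdim(X)-\cat(X)+1$, and is genuine whenever $X$ is not among the rare spaces achieving equality in fact~D).
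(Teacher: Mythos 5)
Your proposal is correct and matches the paper's proof: both extract $\hdim(X)+1$ cover elements with non-empty common intersection via fact~A), take their contractible union $U$, note that the union $V$ of the remaining sets satisfies $\cat(V)\geq\cat(X)-1$ by fact~C), and then apply Theorem~\ref{thm:cat} to $V$ (whose remaining sets form a good cover, so $\ct(V)\leq\ct(X)-\hdim(X)-1$). Your write-up merely makes explicit a few steps the paper leaves tacit, such as the base case and the monotonicity of $m\mapsto\tfrac{1}{2}m(m+1)$.
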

\begin{proof}
By fact A) there exist sets $U_1,\ldots,U_{\hdim(X)+1} \in \mathcal{U}$ that intersect non-trivially. 
Let $U= U_1\cup\ldots\cup\, U_{\hdim(X)+1}$, and let
$V$ be the union of remaining elements of $\mathcal{U}$. As above $ \cat(V) \geq \cat(X)-1$, which together with Theorem \ref{thm:cat} yields
$$ \ct(X)\geq (\hdim(X)+1) + \frac{1}{2}\,\cat(X) (\cat(X) -1).$$
\end{proof}

A similar approach can be used to  estimate  the minimal number of points (vertices) that are required in order to triangulate 
a given PL-manifold. 
Recall that a triangulation of a manifold is \emph{combinatorial} if the links of all vertices are triangulated spheres. 
Then we have the following

\begin{corollary}
\label{cor:cat+conn}
Let $M$ be a $d$-dimensional and $c$-connected closed PL-mani\-fold.
Then 
$$\Delta^{PL}(M)\ge 1+d+ c \cdot (\cat(M)-2) + \frac{1}{2}\,\cat(M)(\cat(M)-1).$$ 
\end{corollary}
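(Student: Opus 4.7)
The plan is to apply Theorem \ref{thm:cat+dim} twice: first implicitly to extract one large contractible piece from $M$, and then directly to the remainder, which inherits both $c$-connectivity and a large LS-category.

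Since $\Delta^{PL}(M)\geq \Delta^{\simeq}(M) = \ct(M)$ by Theorem \ref{thm:min tgl}, it suffices to prove the inequality with $\ct(M)$ in place of $\Delta^{PL}(M)$. Fix a good cover $\mathcal{U}$ of $M$ of cardinality $\ct(M)$ and set $n=\cat(M)$. For $n\leq 2$ the term $c(n-2)$ is non-positive and the result is a direct consequence of Theorem \ref{thm:cat+dim}, so we assume $n\geq 3$. Since $M$ is a closed $d$-manifold, $\hdim(M)=d$, so by fact A there exist $d+1$ members $U_0,\ldots,U_d\in\mathcal{U}$ with non-empty common intersection; by fact B their union $U$ is contractible. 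Let $V$ be the union of the remaining members of $\mathcal{U}$. Then $M=U\cup V$ and, $U$ being categorical in $M$, fact C gives $\cat(V)\geq n-1$.

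I claim $V$ is $c$-connected. First note that $n\geq 3$ forces $c\leq d-2$, since otherwise $M$ would be a homology $d$-sphere with $\cat(M)=2$. Working with $\mathcal{U}$ coming from a combinatorial triangulation realising $\Delta^{PL}(M)$, and taking $U_0,\ldots,U_d$ to be the open stars of the vertices of a top-dimensional simplex $\sigma$, one has $V = M\setminus\overline{\sigma}$, the complement of a closed PL $d$-ball; since removing a $d$-disk from a $c$-connected $d$-manifold leaves $\pi_i$ unchanged for $i\leq d-2$ (elementary general position), $V$ inherits $c$-connectivity from $M$. Next apply the classical LS-category bound $\cat(Y)\leq \hdim(Y)/(c+1)+1$ for $c$-connected CW complexes, a consequence of the Ganea construction (see \cite[Theorem 1.50]{CLOT}), to obtain $\hdim(V)\geq (c+1)(n-2)$. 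Theorem \ref{thm:cat+dim} applied to $V$ then yields
$$\ct(V)\geq 1+(c+1)(n-2)+\tfrac{1}{2}(n-1)(n-2).$$
Since the remaining members of $\mathcal{U}$ form a good cover of $V$, their number is at least $\ct(V)$, so adding the $d+1$ peeled-off sets gives
$$\ct(M)\geq (d+1)+\ct(V)\geq d+2+(c+1)(n-2)+\tfrac{1}{2}(n-1)(n-2),$$
which a routine calculation rewrites as $1+d+c(n-2)+\tfrac{1}{2}n(n-1)$, as required.

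The main technical obstacle is the $c$-connectivity of $V$. Working with a minimal PL-triangulation trivialises the claim, as indicated; for a general good cover one would instead invoke Mayer-Vietoris (or Blakers-Massey) on $M=U\cup V$, observing that $U\cap V\simeq S^{d-1}$ once $U$ is arranged as a small contractible neighbourhood of a top-dimensional simplex, with some extra care needed when $c$ is close to $d$.
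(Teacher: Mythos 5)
Your proof is correct and follows essentially the same route as the paper's: peel off the union $U$ of open stars of the vertices of a top-dimensional simplex, show the remainder $V$ is $c$-connected with $\cat(V)\geq\cat(M)-1$, convert this via $\cat(V)\leq\hdim(V)/(c+1)+1$ into $\hdim(V)\geq(c+1)(\cat(M)-2)$, and count. Two local differences are worth recording. First, where you prove $c$-connectivity of $V=M\setminus\overline{\sigma}$ by general position (pushing maps of dimension $\leq d-2$ off a closed $d$-simplex), the paper uses the Seifert--van Kampen theorem for $\pi_1$ and the Mayer--Vietoris sequence of $\{U,V\}$ with $U\cap V\cong S^{d-1}\times(0,1)$; both are valid, and your reduction of the boundary cases to the single observation that $\cat(M)\geq 3$ forces $c\leq d-2$ is tidier than the paper's explicit case list ($c=0$, $c=d-1$, $d=2$). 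Second, you apply Theorem \ref{thm:cat+dim} to $V$ in one stroke, whereas the paper peels a second contractible union of $(c+1)(\cat(M)-2)+1$ stars off $V$ and only then invokes the category estimate on what remains; the two counts are arithmetically identical, so your version simply avoids re-running the proof of Theorem \ref{thm:cat+dim} inline. One bookkeeping slip should be fixed: you begin by fixing a good cover $\mathcal{U}$ of cardinality $\ct(M)$, but the argument actually requires $\mathcal{U}$ to be the star cover of a combinatorial triangulation, whose cardinality is $\Delta^{PL}(M)$, not $\ct(M)$; the final chain should therefore read $\Delta^{PL}(M)\geq(d+1)+\ct(V)$, which is exactly what the corollary asserts. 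As your closing paragraph itself concedes, the stronger statement with $\ct(M)$ on the left is not established by this argument (nor by the paper's, whose proof is likewise tied to PL-triangulations).
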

\begin{proof}
Note that the case $c=0$ is covered by Theorem \ref{thm:cat+dim}.
Similarly, the claim is correct if we assume $c=d-1$, because that implies $M\simeq S^d$ and 
$\ct(S^d)=d+2$.
Finally, if $d=2$ and $M$ is 0-connected but not 1-connected, then $M$ is a closed surface other 
than sphere, hence $\cat(M)=3$, and the above formula claims  that $\Delta^{PL}(M)\ge 6$, which we 
already know. Therefore we may assume from this point on that  $d\ge 3$ and $1\le c\le d-2$.

Let $K$ be a combinatorial triangulation of $M$ and let $\UU$ be the good cover of $M$ formed
by  open stars of vertices of $K$. Since $M$ is $d$-dimensional,
there are at least $d+1$ open stars that intersect non-trivially. Their union $U$ is the simplicial
neighbourhood of 
a $d$-dimensional simplex, and the intersection of $U$ with the union $V$ of all other open stars is homeomorphic to $S^{d-1} \times (0,1)$. 

Since $d\ge 3$, the Seifert-van Kampen theorem implies that $V$ is simply connected. Furthermore, since $c\le d-2$ the Mayer-Vietoris 
sequence for the cover $\{U,V\}$ of $M$ shows that $H_i(V)\cong H_i(M)$ for $i\le c$, thus $V$ is also $c$-connected.  

We have seen previously that $\cat(V)\ge\cat(M)-1$, so we use the known inequality (see \cite{CLOT}) 
\begin{equation}\label{category by connectivity} \cat(V) \leq \frac{\hdim (V)}{c+1} +1\end{equation}
to deduce that $\hdim(V)\geq (c+1) (\cat(M) -2)$. As before, this implies that at least $(c+1)(\cat(M)-2)+1$ of the open stars
that cover $V$ have a non-trivial intersection, and that $W$, the union of the remaining open stars has category $\cat(W)\geq \cat(M)- 2$. 
By applying Theorems \ref{thm:min tgl} and \ref{thm:cat+dim} to estimate the number of vertices 
in $W$ we may conclude that $K$ has at least
$$ 1+d+(c+1)(\cat(M)-2) +1 +\frac{1}{2}\,(\cat(M)-1)(\cat(M)-2)=$$
$$= 1+d +c\cdot (\cat(M)-2) + \frac{1}{2}\,\cat(M)(\cat(M)-1)$$
vertices.
\end{proof}

Observe that this estimate is a strict improvement of Theorem \ref{thm:cat+dim} for all PL-manifolds which are at least $1$-connected
and are not spheres. For example, it shows that every triangulation of $\CC P^n$ requires at least $\frac{1}{2}\,n(n+7)$ vertices. 


We may also reverse  the above estimates to obtain upper bounds for the category of a space based on the cardinality of good cover or the number of vertices in a triangulation.

\begin{corollary}
\label{cor:cat by ct}
Assume that $X$ admits a good cover with $n$ elements. Then the category of $X$ is bounded above by
$$ \cat(X) \leq \frac{-1+\sqrt{1+8n}}{2}.$$
If the dimension of $X$ is known we have also a better estimate
$$ \cat(X) \leq \frac{1+\sqrt{1+8(n-\hdim(X)-1)}}{2}.$$
\end{corollary}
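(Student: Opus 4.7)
The plan is simply to invert the two lower bounds established in Theorems \ref{thm:cat} and \ref{thm:cat+dim} as quadratic inequalities in $\cat(X)$. Since by hypothesis $X$ admits a good cover of cardinality $n$, the very definition of covering type gives $\ct(X)\le n$, so any lower bound on $\ct(X)$ in terms of $\cat(X)$ translates immediately into an upper bound on $\cat(X)$ in terms of $n$.

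For the first estimate, I would set $k:=\cat(X)$ and combine $\ct(X)\le n$ with Theorem \ref{thm:cat} to obtain
\[
n\;\ge\;\tfrac{1}{2}\,k(k+1),\qquad\text{i.e.,}\qquad k^{2}+k-2n\;\le\;0.
\]
Since $k\ge 0$, the quadratic formula yields $k\le\frac{-1+\sqrt{1+8n}}{2}$, which is the first claimed bound.

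For the sharper estimate, I would apply Theorem \ref{thm:cat+dim} in the same way: the hypothesis $\ct(X)\le n$ forces
\[
n\;\ge\;1+\hdim(X)+\tfrac{1}{2}\,k(k-1),\qquad\text{hence}\qquad k^{2}-k-2\bigl(n-\hdim(X)-1\bigr)\;\le\;0.
\]
Solving this quadratic in $k$ (and again using $k\ge 0$, so that the positive root governs the upper bound) produces $k\le\frac{1+\sqrt{1+8(n-\hdim(X)-1)}}{2}$, as desired.

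There is essentially no obstacle to overcome here: all the topological content is already encoded in Theorems \ref{thm:cat} and \ref{thm:cat+dim}, and the corollary is a purely algebraic inversion of those inequalities. The only minor point worth noting is that the radicands are non-negative precisely because the theorems guarantee $n\ge\tfrac{1}{2}\cat(X)(\cat(X)+1)\ge 0$ and $n-\hdim(X)-1\ge\tfrac{1}{2}\cat(X)(\cat(X)-1)\ge 0$, so the square roots are well-defined in both statements.
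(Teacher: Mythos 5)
Your proposal is correct and takes essentially the same approach as the paper: the published proof consists of the single remark that the estimates follow by solving the inequalities of Theorems \ref{thm:cat} and \ref{thm:cat+dim} for $\cat(X)$, which is exactly the quadratic inversion you carry out. Your explicit use of $\ct(X)\le n$ and your check that the radicands are non-negative merely spell out details the paper leaves implicit.
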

\begin{proof} The estimates are easily proved by solving the inequalities in Theorem \ref{thm:cat} 
and Theorem \ref{thm:cat+dim} for $\cat(X)$.
\end{proof}

We mentioned the general estimate $\ct(X)-\hdim(X)\ge 2$ that holds for every non-contractible 
space $X$. By using the above inequalities we easily see that $\ct(X)-\hdim(X)\le 3$ implies 
$ \cat(X) \le 2$ and that $\ct(X)-\hdim(X)\le 6$ implies $ \cat(X) \le 3$. This leads to 
the following interesting result. 

\begin{corollary}
Assume that $\ct(X)-\hdim(X)\le 3$ or that $X$ is a closed manifold (of dimension at least 3) 
and $\ct(X)-\dim(X)\le 6$. Then the fundamental group of $X$ is free. 
\end{corollary}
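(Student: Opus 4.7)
The plan is to combine Theorem~\ref{thm:cat+dim} with known results linking low LS-category to freeness of the fundamental group.

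First, I would translate both hypotheses into bounds on $\cat(X)$. Theorem~\ref{thm:cat+dim} gives $\ct(X)-\hdim(X)-1\ge \tfrac12\,\cat(X)(\cat(X)-1)$, so the hypothesis $\ct(X)-\hdim(X)\le 3$ forces $\cat(X)(\cat(X)-1)\le 4$, hence $\cat(X)\le 2$. For the manifold case, note that a closed $n$-manifold $M$ has $H_n(M;\ZZ/2)\ne 0$, so $\hdim(M)\ge n$; combined with $\hdim(M)\le\dim(M)=n$, we get $\hdim(M)=n$. Then $\ct(M)-\dim(M)\le 6$ gives $\cat(M)(\cat(M)-1)\le 10$, whence $\cat(M)\le 3$.

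Second, I would dispose of the case $\cat(X)\le 2$ by invoking the classical theorem of G\'omez-Larra\~naga and Gonz\'alez-Acu\~na: any connected CW complex with $\cat\le 2$ has free fundamental group. The idea behind their proof is to take a categorical cover $X=U\cup V$, refine it so that the pieces and their intersections are well-behaved, and then apply van Kampen: since $U$ and $V$ are categorical in $X$, the images of $\pi_1(U)$ and $\pi_1(V)$ in $\pi_1(X)$ are trivial, so the only relations come from the path components of $U\cap V$ and $\pi_1(X)$ ends up free.

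Third, for the manifold case with $\cat(M)\le 3$ I would invoke the extension of this result to closed manifolds of dimension $\ge 3$: if $M^n$ ($n\ge 3$) is a closed manifold and $\cat(M)\le 3$, then $\pi_1(M)$ is free. In dimension three this is again due to G\'omez-Larra\~naga and Gonz\'alez-Acu\~na, while the higher-dimensional version is due to Dranishnikov and collaborators. The dimension assumption $n\ge 3$ is essential here, as $T^2$ has $\cat=3$ with non-free fundamental group; this is precisely why the statement of the corollary treats the manifold case separately and requires $\dim M\ge 3$.

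The main obstacle is not the covering-type arithmetic (which reduces immediately to Theorem~\ref{thm:cat+dim}) but rather the step from a category bound to freeness of $\pi_1$: both the general $\cat\le 2$ result and its $\cat\le 3$ closed-manifold analogue are non-trivial theorems whose hypotheses (in particular, the dimension restriction for the second case) must be tracked carefully when citing them.
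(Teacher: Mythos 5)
Your proposal is correct and follows essentially the same route as the paper: both translate the hypotheses into $\cat(X)\le 2$ resp.\ $\cat(M)\le 3$ via Theorem~\ref{thm:cat+dim} (the paper does this through Corollary~\ref{cor:cat by ct}, which is just that inequality solved for $\cat$), then invoke the classical ``$\cat\le 2$ implies $\pi_1$ free'' result and the Dranishnikov--Katz--Rudyak theorem for closed manifolds. Your explicit verification that $\hdim(M)=\dim(M)$ for a closed manifold (via $H_n(M;\ZZ/2)\neq 0$) and your tracking of the $\dim\ge 3$ restriction are points the paper leaves implicit, but the argument is the same.
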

\begin{proof}
If $\ct(X)-\hdim(X)\le 3$ then by \ref{cor:cat by ct} the category of $X$ is at most 2,
and a well-known result see \cite[Section 1.6]{CLOT} implies that $\pi_1(X)$ is a free group. 

Similarly, if $\ct(X)-\hdim(X)\le 6$, then the category of $X$ is at most 3. 
Our claim then follows from the main result of \cite{DKR} that the fundamental group of 
a closed manifold whose fundamental group is not free must have category at least 4. 
Note that the second statement holds for closed surfaces as well, with the exception of the torus, 
the projective plane and the Klein bottle. 
\end{proof}


\section{Cohomological estimates} 

It is well-known that the Lusternik-Schnirelmann category of a space $X$ is closely related to the structure of the cohomology ring $\tilde{H}^*(X)$. 
Indeed, $\cat(X)$ is bounded below by the so-called \emph{cup-length} of $X$, which is defined as the maximal number of factors among all non-trivial products 
in $\tilde{H}^*(X)$ (and with any coefficients, see \cite[Proposition 1.5]{CLOT}). However, that estimate does not involve the respective dimensions of the factors 
in the product. We are going to show that the latter play an important role in the estimate of covering type, which will lead to considerable
improvements in  our estimates of the covering type of $X$ .

Given an $n$-tuple of positive integers $i_1,\ldots,i_n \in \NN$ we will say that a space $X$ \emph{admits 
an essential $(i_1,\ldots,i_n)$-product} if there are cohomology classes $x_k \in H^{i_k}(X)$, such that 
the product $x_1\cdot x_2\cdot\ldots\cdot x_n$ is non-trivial. 
For every $(i_1,\ldots,i_n)$ there exist a space $X$ that admits an essential $(i_1,\ldots,i_n)$-product, 
for example we can take $X=S^{i_1}\times\cdots\times S^{i_n}$.
Clearly, if $X$ admits an essential $(i_1,\ldots,i_n)$-product then so does every  $Y\simeq X$, 
since their cohomology rings are isomorphic. We may therefore define the \emph{covering type of the $n$-tuple of positive integers} $(i_1,\ldots,i_n)$ as
$$\ct(i_1,\ldots,i_n):=\min\big\{\ct(X)\mid X\,\text{admits\ an\ essential}\ (i_1,\ldots,i_n)\mathrm{-product}\big\}$$ 
The following proposition follows immediately from the definition.
\begin{proposition}
\label{prop:coho est}
$$ \ct (X) \geq \max \{\ct(|x_1|,\ldots,|x_n|)\mid \  \mathrm{for\  all}\ 0\neq x_1\cdots x_n\in H^*(X)\}$$
\end{proposition}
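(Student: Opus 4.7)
The plan is to simply unwind the definition of $\ct(i_1,\ldots,i_n)$, since the proposition is, as the authors indicate, an immediate consequence.

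First I would fix any tuple $(x_1,\ldots,x_n)$ of cohomology classes in $H^*(X)$ (with $|x_k|=i_k$) whose product $x_1\cdots x_n$ is non-zero. By the very definition of an essential $(i_1,\ldots,i_n)$-product, the space $X$ itself is then a member of the collection
\[
\bigl\{Y \mid Y \text{ admits an essential } (i_1,\ldots,i_n)\text{-product}\bigr\}
\]
over which the minimum defining $\ct(i_1,\ldots,i_n)$ is taken. Hence $\ct(X) \geq \ct(i_1,\ldots,i_n) = \ct(|x_1|,\ldots,|x_n|)$.

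Since this inequality holds for every non-trivial product $x_1\cdots x_n$ in $H^*(X)$, taking the maximum of the right-hand side over all such essential products yields the desired bound. There is no real obstacle here: the entire content of the statement is that the numerical invariant $\ct(i_1,\ldots,i_n)$, being defined as a minimum over spaces realizing a given multiplicative pattern, automatically furnishes a lower bound on the covering type of any particular space realizing that pattern, and one then optimizes over all patterns present in $H^*(X)$.
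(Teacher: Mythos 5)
Your proof is correct and is precisely the argument the paper has in mind: the paper states the proposition ``follows immediately from the definition,'' and your unwinding of the definition of $\ct(i_1,\ldots,i_n)$ as a minimum over spaces admitting an essential $(i_1,\ldots,i_n)$-product, with $X$ itself belonging to that collection, is that immediate argument spelled out. Nothing further is needed.
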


Although the covering type of a specific product of cohomology classes may appear as a coarse estimate it will serve very well our purposes. 
We will base our computations on the following technical lemmas. The first is a standard argument that we give here for the convenience of the reader. 

\begin{lemma}
Let $X=U\cup V$ where $U,V$ are open in $X$, and let $x,y\in\widetilde H^*(X)$ be cohomology classes whose product $x\cdot y$ is non-trivial.
If $U$ is categorical in $X$ then $i_V^*(x)$ is a non-trivial element of $H^*(V)$ (here $i_V$ stands for the inclusion map $i_V: V \hookrightarrow X$).
\end{lemma}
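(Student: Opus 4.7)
The plan is to argue by contradiction: assume $i_V^*(x)=0$ and deduce $x\cdot y=0$, contrary to hypothesis. The main tool is the \emph{relative cup product}
$$H^p(X,V)\otimes H^q(X,U)\;\longrightarrow\;H^{p+q}(X,U\cup V)$$
together with its compatibility with the absolute cup product under restriction of pairs.

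First I would use the categoricity hypothesis. Since the inclusion $U\hookrightarrow X$ is null-homotopic, the induced map $i_U^*$ vanishes on $\widetilde H^*(X)$; in particular $i_U^*(y)=0$. Exactness in the long exact sequence of the pair $(X,U)$ then yields a relative class $\bar y\in H^*(X,U)$ with $j_U^*(\bar y)=y$. The standing assumption $i_V^*(x)=0$ similarly produces $\tilde x\in H^*(X,V)$ with $j_V^*(\tilde x)=x$.

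Next, form the relative cup product $\tilde x\cup\bar y\in H^{|x|+|y|}(X,U\cup V)$. Because $U\cup V=X$, this group equals $H^{|x|+|y|}(X,X)=0$, so $\tilde x\cup\bar y=0$. On the other hand, naturality of cup products with respect to the inclusion of pairs $(X,\emptyset)\hookrightarrow(X,U\cup V)$ identifies the image of $\tilde x\cup\bar y$ in $H^{|x|+|y|}(X)$ with $j_V^*(\tilde x)\cup j_U^*(\bar y)=x\cdot y$. Hence $x\cdot y=0$, the desired contradiction.

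The only mildly delicate ingredient is the well-definedness and naturality of the relative cup product on $H^*(X,U\cup V)$; but since $U$ and $V$ are open in $X$ the standard excisive condition is automatically satisfied, and the compatibility with the absolute cup product is routine (see e.g.\ \cite[Chapter 3]{Hatcher}). So I do not expect any real obstacle: once the relative cup product machinery is in place, the argument is a short diagram chase.
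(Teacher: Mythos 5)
Your proposal is correct and is essentially identical to the paper's proof: both proceed by contradiction, lift $x$ and $y$ to relative classes in $H^*(X,V)$ and $H^*(X,U)$ via exactness (using categoricity of $U$ to kill $i_U^*(y)$ on reduced cohomology), and conclude via the relative cup product landing in $H^*(X,U\cup V)=H^*(X,X)=0$. Your explicit remark that openness of $U$ and $V$ guarantees the excisive condition needed for the relative cup product is a point the paper leaves implicit, but the argument is the same.
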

\begin{proof}
Assume by contradiction that $i_V^*(x)=0$. Exactness of the cohomology sequence 
$$H^*(X,V) \stackrel{j_V^*}{\longrightarrow} H^*(X) \stackrel{i_V^*}{\longrightarrow} H^*(V)$$
implies that there is a class $\bar x\in H^*(X,V)$ such that $j_V^*(\bar x)=x$. 
Moreover $i_U^*(y)=0$, because $i_U\colon U\hookrightarrow X$ is null-homotopic, so there 
is a class $\bar y\in H^*(X,U)$ such that $j_U^*(\bar y)=y$. 
Then $x\cdot y=j_V^*(\bar x)\cdot j_U^*(\bar y)$ is by naturality equal to the image of $\bar x\cdot\bar y\in H^*(X,U\cup V)=0$, therefore $x\cdot y=0$, which 
contradicts the assumptions of the lemma. 
\end{proof}

By inductive application of the above lemma we obtain the following:

\begin{lemma}
\label{lem:subproducts}
Let  $x_1,\ldots,x_n\in\widetilde H^*(X)$ be cohomology classes whose product $ x_1\cdots x_n$ is non-trivial, and let $X=U_1\cup\ldots\cup U_k\cup V$ where $U_1,\ldots,U_k$ 
are open, categorical subsets of $X$, and $V$ is open in $X$. Then the product of any $(n-k)$ different classes among $i^*_V(x_1),\ldots,i^*_V(x_n)$ is a non-trivial class in  $H^*(V)$.
\end{lemma}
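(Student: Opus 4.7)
The plan is to argue by contradiction, directly extending the relative cup-product technique from the proof of the preceding lemma. Fix an arbitrary $(n-k)$-subset $S\subseteq\{1,\ldots,n\}$ and enumerate its complement as $T=\{t_1,\ldots,t_k\}$, where the index $t_j$ is to be ``paired'' with the categorical set $U_j$. Assume, toward a contradiction, that $\prod_{i\in S} i_V^*(x_i)=0$ in $H^*(V)$.

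Exactness of the long exact sequence of $(X,V)$ then yields a relative lift $\bar x_S\in H^*(X,V)$ with $j_V^*(\bar x_S)=\prod_{i\in S}x_i$. For each $j=1,\ldots,k$, categoricality of $U_j$ in $X$ makes $U_j\hookrightarrow X$ null-homotopic, so $i_{U_j}^*(x_{t_j})=0$ in $\widetilde H^*(U_j)$; exactness of the long exact sequence of $(X,U_j)$ then produces a relative lift $\bar x_{t_j}\in H^*(X,U_j)$ with $j_{U_j}^*(\bar x_{t_j})=x_{t_j}$. Form the iterated relative cup product
$$\bar x_S\cdot\bar x_{t_1}\cdot\ldots\cdot\bar x_{t_k}\;\in\;H^*\bigl(X,\,V\cup U_1\cup\ldots\cup U_k\bigr)\;=\;H^*(X,X)\;=\;0.$$
By naturality of cup product with respect to the restriction maps $j^*\colon H^*(X,A)\to H^*(X)$, the image of this class in $H^*(X)$ equals $\pm\bigl(\prod_{i\in S}x_i\bigr)\cdot x_{t_1}\cdot\ldots\cdot x_{t_k}=\pm x_1\cdot\ldots\cdot x_n$, which would thus have to vanish, contradicting the hypothesis.

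The main technical point to check is the existence and naturality of the iterated relative cup product $H^*(X,A_1)\otimes\cdots\otimes H^*(X,A_m)\to H^*(X,A_1\cup\cdots\cup A_m)$ for open $A_i\subseteq X$, together with its compatibility with restriction to absolute cohomology; this is a standard extension of the binary version already invoked in the preceding lemma. Alternatively, one may proceed by induction on $k$, peeling off a single $U_j$ at a time via the preceding lemma itself. The subtle point in that version is to note that the lemma's use of categoricality is only invoked through the vanishing of $i_{U_j}^*$ on the specific class being lifted, and this condition persists through the induction because all the relevant classes are pullbacks from $H^*(X)$, where $U_j$ is categorical by hypothesis.
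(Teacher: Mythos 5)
Your proposal is correct. Note, though, that the paper gives no written proof of this lemma beyond the single remark that it follows ``by inductive application of the above lemma,'' i.e.\ by peeling off one categorical set at a time exactly as in your alternative sketch. Your primary argument is the unrolled, one-shot version of that induction: lift $\prod_{i\in S}x_i$ to $H^*(X,V)$ and each $x_{t_j}$ to $H^*(X,U_j)$, and multiply into $H^*(X,V\cup U_1\cup\cdots\cup U_k)=H^*(X,X)=0$; this is sound, since the iterated relative cup product $H^*(X,A_1)\otimes\cdots\otimes H^*(X,A_m)\to H^*(X,A_1\cup\cdots\cup A_m)$ exists and is natural for open subsets (the openness hypothesis in the lemma is what makes the excision condition automatic), and the sign from reordering $x_1\cdots x_n$ is irrelevant to non-vanishing. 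The direct version buys you something the paper's one-line induction glosses over: in the inductive route the ambient space shrinks to $V'=U_2\cup\ldots\cup U_k\cup V$, and $U_j$ need not be categorical in $V'$, so the binary lemma does not literally apply; one must observe, as you do, that its proof only uses $i_{U_j}^*(y)=0$, which holds because the classes in play are restrictions of classes on $X$, where categoricality is assumed. You have correctly identified and resolved this subtlety, so both of your routes are complete, and the direct one avoids the issue entirely.
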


The following simple extension of \cite[Proposition 3.1]{K-W} will allow a slightly better estimate of $\ct(i_1,\ldots\,i_n)$ if the parameters $i_1,\ldots\,i_n$ are not all equal. 

\begin{lemma}
\label{lem:two homologies}
If $X$ has non-trivial reduced homology groups in different dimensions, then $\ct(X) \geq \hdim(X)+3$.
\end{lemma}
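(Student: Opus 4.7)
The plan is to prove the contrapositive: if $\ct(X) \leq \hdim(X) + 2$, then $\tilde H_*(X)$ is concentrated in at most one degree. In fact I will show the stronger statement that such an $X$ must be contractible or homotopy equivalent to $S^{\hdim(X)}$, by induction on $d := \hdim(X)$.

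The base case $d = 0$ is immediate: a good cover of size at most $2$ realizes $X$ as a simplicial complex on at most two vertices, hence $X$ is a point, an edge, or $S^{0}$. For the inductive step, assume $d \geq 1$ and that the statement holds for homotopy dimension $d-1$, and fix on a representative of the homotopy class a good cover $\UU$ with $|\UU| = \ct(X) \leq d+2$. Since $\hdim(X) = d$ the nerve $|N(\UU)|$ has dimension exactly $d$: a strictly smaller dimension would contradict $\hdim(X) = d$, while $\dim |N(\UU)| = d+1$ would force $|N(\UU)| = \Delta^{d+1}$, contractible. This in particular forces $|\UU| = d+2$, and after reindexing $U_0 \cap \cdots \cap U_d \neq \emptyset$. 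By fact B) the union $U := U_0 \cup \cdots \cup U_d$ is contractible; set $V := U_{d+1}$, also contractible.

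Then $X = U \cup V$ is a homotopy pushout of two contractible open sets along $L := U \cap V$, so $X \simeq \Sigma L$. The family $\{U_i \cap V\}_{i=0}^{d}$ (discarding empties) is a good cover of $L$ with at most $d+1$ members, and by the Nerve theorem its nerve coincides with the link of $v_{d+1}$ in $|N(\UU)|$. Because $\dim |N(\UU)| = d$ rather than $d+1$, this link is a proper subcomplex of the $d$-simplex on $\{v_0, \ldots, v_d\}$, giving $\hdim(L) \leq d-1$. The opposite inequality $\hdim(L) \geq d-1$ follows from $d = \hdim(\Sigma L) \leq \hdim(L) + 1$, so in fact $\hdim(L) = d-1$ and $\ct(L) \leq d+1 = \hdim(L) + 2$. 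The inductive hypothesis then yields that $L$ is contractible or $L \simeq S^{d-1}$, whence $X \simeq \Sigma L$ is contractible or $X \simeq S^{d}$; in either case the reduced homology is concentrated in a single degree.

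The main technical point is the identification $X \simeq \Sigma L$ via the homotopy pushout of $U \leftarrow U \cap V \to V$, together with the bookkeeping $\hdim(L) = d-1$ and $\ct(L) \leq \hdim(L) + 2$ that feeds the induction. The edge case $L = \emptyset$ would give $X \simeq S^{0}$ of homotopy dimension $0$, which is excluded by $d \geq 1$, so it does not arise.
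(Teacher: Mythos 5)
Your proof is correct, and it takes a genuinely different route from the paper's. The paper's proof is a two-line argument: if $\ct(X)\le\hdim(X)+2$, then $X$ is homotopy equivalent to a subcomplex of $\Delta_{\hdim(X)+1}$, and the paper then simply asserts the classification fact that the only such subcomplex of homotopy dimension $\hdim(X)$ is $\partial\Delta_{\hdim(X)+1}$, whose reduced homology sits in a single degree. You share the same starting point (a nerve on at most $\hdim(X)+2$ vertices) but, instead of citing the classification, you prove its homotopy-theoretic content by induction on $d=\hdim(X)$: you split the cover as $U=U_0\cup\cdots\cup U_d$ (contractible by fact B)) and $V=U_{d+1}$, identify $X$ with the homotopy pushout of $U\leftarrow U\cap V\to V$, hence $X\simeq\Sigma L$ for $L=U\cap V$, note that the induced good cover of $L$ has nerve equal to the link of $v_{d+1}$ (at most $d+1$ vertices, dimension at most $d-1$), and feed $\hdim(L)=d-1$, $\ct(L)\le\hdim(L)+2$ into the inductive hypothesis to conclude $X$ is contractible or $X\simeq S^d$. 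The bookkeeping is in order, including the edge case $L=\emptyset$. What your approach buys is a self-contained justification of exactly the step the paper leaves unproved — in effect you show that any complex on $d+2$ vertices of homotopy dimension $d$ is homotopy equivalent to $S^d$ — at the cost of invoking the gluing theorem for numerable binary open covers to get $X\simeq\Sigma L$, a tool of the same standing as the Nerve theorem used throughout the paper. Two minor points of presentation: the identification of the nerve of $\{U_i\cap V\}$ with the link of $v_{d+1}$ is purely combinatorial, the Nerve theorem being what then yields $L\simeq|\mathrm{lk}(v_{d+1})|$; and your claim that $\dim|N(\UU)|=d$ forces $|\UU|=d+2$ deserves its one-line justification, namely that $|\UU|\le d+1$ together with $\dim|N(\UU)|=d$ would make the nerve the full simplex $\Delta_d$, which is contractible and so contradicts $d\ge 1$.
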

\begin{proof}
If $\ct(X) \leq \hdim(X)+2$, then $X$ is homotopy equivalent to a subcomplex of $\Delta_{\hdim(X)+1}$. The only subcomplex of $\Delta_{\hdim(X)+1}$ that has
homotopy dimension equal to $\hdim(X)$ is $\partial \Delta_{\hdim(X)+1}$, which has only one non-trivial reduced homology group. 
\end{proof}

We are ready to prove the main result of this section, an 'arithmetic' estimate for the covering type of a $n$-tuple:

\begin{theorem}
\label{thm:arithmetic}
$$ \ct(i_1,\ldots\,i_n)\geq  i_1 + 2\, i_2 + \, \cdots\, + n i_n + (n+1) $$
If $i_1,\ldots\,i_n$ are not all equal, then 
$$ \ct(i_1,\ldots\,i_n)\geq  i_1 + 2\, i_2 + \, \cdots\, + n i_n + (n+2) $$
\end{theorem}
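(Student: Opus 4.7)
\medskip\noindent\textbf{Proof sketch.} The LHS is symmetric in the $i_l$'s (swapping factors in a cup product preserves non-vanishing up to sign), so I may WLOG assume $i_1\le i_2\le\cdots\le i_n$, which is the ordering for which the RHS is maximal. Fix a space $X$ realizing $\ct(i_1,\ldots,i_n)$ via classes $x_l\in H^{i_l}(X)$ with $x_1\cdots x_n\ne 0$, and fix a good cover $\mathcal{U}$ of $X$ with $N:=\ct(X)$ elements.

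The plan is to carve $n-1$ contractible open pieces $U_1,\ldots,U_{n-1}$ out of $X$ iteratively. Set $V_0:=X$ and, for $j\ge 1$, let $V_j$ be the union of the $\mathcal{U}$-sets not used in $U_1\cup\cdots\cup U_j$, so that $X=U_1\cup\cdots\cup U_j\cup V_j$. The guiding invariant is that $V_j$ admits an essential $(i_{j+1},\ldots,i_n)$-product. Assuming this for $V_{j-1}$, we get $\hdim V_{j-1}\ge i_j+\cdots+i_n$, so Fact (A) applied to $V_{j-1}$ produces $i_j+\cdots+i_n+1$ sets of $\mathcal{U}$ with a common intersection; let $U_j$ be their union, which is contractible by Fact (B) and hence categorical in $X$. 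Lemma \ref{lem:subproducts} applied to $X=U_1\cup\cdots\cup U_j\cup V_j$ (with $j$ categorical pieces) states that any $n-j$ of the restrictions $i_{V_j}^*(x_l)$ multiply non-trivially; selecting the tuple $x_{j+1},\ldots,x_n$ confirms the invariant for $V_j$.

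After $n-1$ iterations, Lemma \ref{lem:subproducts} with $n-1$ categorical pieces peeled gives the stronger conclusion that \emph{every individual} restriction $i_{V_{n-1}}^*(x_l)$ is non-zero. Hence $V_{n-1}$ is non-contractible with $\hdim V_{n-1}\ge i_n$, and carries non-trivial cohomology in each of the dimensions $i_1,\ldots,i_n$. The standard estimate $\ct(Y)\ge\hdim(Y)+2$ for non-contractible $Y$ (an immediate consequence of Fact (A), exactly as in the proof of Lemma \ref{lem:two homologies}) yields $\ct(V_{n-1})\ge i_n+2$. If moreover the $i_l$'s are not all equal, then at least two of these dimensions are distinct and Lemma \ref{lem:two homologies} sharpens the bound to $\ct(V_{n-1})\ge i_n+3$.

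Summing, $N\ge\sum_{j=1}^{n-1}(i_j+\cdots+i_n+1)+\ct(V_{n-1})$. A short rearrangement gives $\sum_{j=1}^{n-1}\sum_{l=j}^{n}i_l=\sum_{l=1}^{n-1}l\cdot i_l+(n-1)i_n$, so the two cases for $\ct(V_{n-1})$ yield respectively $N\ge i_1+2i_2+\cdots+n\,i_n+(n+1)$ and $N\ge i_1+2i_2+\cdots+n\,i_n+(n+2)$. The crux is the final application of Lemma \ref{lem:subproducts} with the maximal number $n-1$ of peeled pieces, which is precisely what guarantees non-vanishing of every \emph{individual} restriction $i_{V_{n-1}}^*(x_l)$ and so activates Lemma \ref{lem:two homologies} in the unequal case; correctly propagating this invariant from $V_{j-1}$ to $V_j$ at each peeling step is the main bookkeeping obstacle.
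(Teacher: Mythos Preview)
Your argument is correct and follows essentially the same route as the paper's direct proof: iteratively peel off $n-1$ contractible unions $U_j$ of size $i_j+\cdots+i_n+1$, use Lemma~\ref{lem:subproducts} to push the non-vanishing of sub-products to the successive complements $V_j$, and finish by applying the $\hdim+2$ bound (respectively Lemma~\ref{lem:two homologies}) to $V_{n-1}$. The paper additionally records a separate inductive proof for the first inequality, but its main argument and yours coincide; your explicit WLOG reduction to $i_1\le\cdots\le i_n$ is a clean way to make the role of the ordering visible, which the paper leaves implicit.
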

\begin{proof} 
The first statement can be proved by induction. Unfortunately, the same approach is not sufficient to prove the stronger statement, 
and a modified inductive argument turns out to be quite complicated, and we find it easier to give a direct proof. 
Although the second proof covers the first statement as well, we believe that it still of some interest to 
be able compare the two methods. 

Toward the proof of the first statement, we begin the induction by observing that if $0\neq x_1\in H^{i_1}(X)$ then $\hdim(X) \geq i_1$, 
hence $\ct(i_1)\ge i_1+2$ by \cite[Proposition 3.1]{K-W}.

Assume that the estimate holds for all sequences of $(n-1)$ positive integers and consider the classes $x_1\in\widetilde H^{i_1}(X),\ldots,x_n\in\widetilde H^{i_n}(X)$ 
such that the product $x_1 \cdots x_n \in H^{i_1+\ldots +i_n}(X)$ is non-trivial. 
The cohomological dimension of $X$ is at least $i_1+\ldots+i_n$, therefore in every good cover $\mathcal{U}$  of $X$ one can find $i_1+\cdots+i_n+1$ elements
that intersect non-trivially. Denote their union by $U$ and the union of  the remaining elements of $\mathcal{U}$ by $V$. Then $U$ is contractible and by 
Lemma \ref{lem:subproducts}  there exists in  $\widetilde H^*(V)$ a non-zero  product  of elements whose degrees are $i_2,\dots,i_n$. By induction we obtain
$$ \ct(X) \geq (i_1+\cdots+i_n +1)+ (i_2 +2 i_3+\ldots+(n-1) i_n+n) =$$
$$= i_1+2 i_2+\ldots+n i_n + (n+1), $$
which proves the first statement.

For the second statement, let $\UU$ be a good cover of $X$, and assume that the product of classes $x_1\in\widetilde H^{i_1}(X),\ldots,x_n\in\widetilde H^{i_n}(X)$ is non-trivial. 
As before, there exists $\UU_1\subseteq \UU$, such that $\UU_1$ contains $(i_1+\ldots+i_n+1)$ sets that intersect non-trivially. If we denote by $V_1$ the union of sets in $\UU-\UU_1$, 
then by Lemma \ref{lem:subproducts} the restriction to $V_1$ of any sub-product of $x_1 \cdots x_n $  of length $(n-1)$ is non-trivial. In particular, $H^{i_2+\ldots+i_n}(V_1)\neq 0$, 
and so there exists $\UU_2\subseteq \UU-\UU_1$, such that $\UU_2$ contains $(i_2+\ldots+i_n+1)$ sets that intersect non-trivially. 
By continuing this procedure we end up with disjoint collections $\UU_1,\ldots,\UU_{n-1}\subseteq \UU$, where each $\UU_k$ has $(i_k+\ldots+i_n+1)$ elements and the union of its elements 
is contractible. 

Let $V$ denote the union of all elements in $\UU_n:=\UU-\UU_1-\ldots-\UU_{n-1}$. By Lemma \ref{lem:subproducts} $H^*(V)$ has non trivial cohomology classes in dimensions $i_1,\ldots,i_n$. 
Since we assumed that they are not all equal, Lemma \ref{lem:two homologies} implies that $\UU_n$ has at least $i_n+3$ elements. By adding up the cardinalities 
of all $\UU_k$ we conclude that $\UU$ has at least $i_1 + 2\, i_2 + \, \cdots\, + n i_n + (n+2)$ elements.
\end{proof}

It is worth to emphasize that it is usually not difficult to identify the cup product in $\widetilde H^*(X)$ which provides the best estimate for the covering type.
In particular, it  clearly makes sense to  consider only products whose terms have non-decreasing degrees. The rest of the section is dedicated to 
computations of specific examples (projective spaces, products of spheres, etc.) based on Theorem \ref{thm:arithmetic}.

\begin{corollary}\label{cor:ct projective spaces}
The covering type of projective spaces is bounded by:\\ $\ct(\RR P^n)\ge\frac{1}{2}(n+1)(n+2)$, $\ct(\CC P^n)\ge (n+1)^2$, $\ct(\HH P^n)\ge (n+1)(2n+1)$.
\end{corollary}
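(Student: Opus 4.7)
The plan is to apply Proposition \ref{prop:coho est} together with Theorem \ref{thm:arithmetic} to the top cup product in the mod $2$ (resp.\ integral) cohomology ring of each projective space. In all three cases the cohomology ring is a truncated polynomial algebra on a single generator, so the optimal essential product to exploit is the $n$-fold product of the generator with itself.

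For $\RR P^n$, the mod $2$ cohomology is $\ZZ/2[x]/(x^{n+1})$ with $|x|=1$, so $\RR P^n$ admits an essential $(1,1,\ldots,1)$-product with $n$ ones. Theorem \ref{thm:arithmetic} then yields
\begin{equation*}
\ct(\RR P^n)\ \geq\ 1+2+\cdots+n+(n+1)\ =\ \frac{n(n+1)}{2}+(n+1)\ =\ \frac{(n+1)(n+2)}{2}.
\end{equation*}
For $\CC P^n$, the integral cohomology ring is $\ZZ[x]/(x^{n+1})$ with $|x|=2$, giving an essential $(2,2,\ldots,2)$-product of length $n$, and
\begin{equation*}
\ct(\CC P^n)\ \geq\ 2+4+\cdots+2n+(n+1)\ =\ n(n+1)+(n+1)\ =\ (n+1)^2.
\end{equation*}
For $\HH P^n$ the generator has degree $4$, so the $n$-fold product is an essential $(4,4,\ldots,4)$-product and Theorem \ref{thm:arithmetic} gives
\begin{equation*}
\ct(\HH P^n)\ \geq\ 4+8+\cdots+4n+(n+1)\ =\ 2n(n+1)+(n+1)\ =\ (n+1)(2n+1).
\end{equation*}

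There is really no obstacle here: the whole work is contained in Theorem \ref{thm:arithmetic}, and the corollary is a one-line arithmetic substitution once one identifies the correct essential product in each cohomology ring. The only point worth a sentence of commentary is that since all degrees in each product are equal, only the weaker $(n+1)$-form of Theorem \ref{thm:arithmetic} applies; the stronger $(n+2)$-form would not improve anything here. If one wished, one could double-check that no mixed product of lower-degree classes gives a better bound, but for truncated polynomial algebras on one generator the above choice is evidently optimal.
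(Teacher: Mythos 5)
Your proposal is correct and matches the paper's proof: both apply Theorem \ref{thm:arithmetic} to the essential $n$-fold product of the degree-$k$ generator of the truncated polynomial cohomology ring, with $k=1,2,4$ for $\RR P^n$, $\CC P^n$, $\HH P^n$ respectively, giving $\ct \ge \frac{1}{2}(n+1)(kn+2)$. Your arithmetic agrees with the paper's in all three cases, and your remark that only the $(n+1)$-form of the theorem applies (since all degrees are equal) is also accurate.
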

\begin{proof}
By Theorem \ref{thm:arithmetic} it is sufficient to estimate
$$\ct(\underset{n}{\underbrace{k,\ldots,k}})\ge  k + 2 k+\ldots+ nk + n  +1 =\frac{1}{2} (n+1)(kn+2),$$
because the real, complex and quaternionic projective spaces correspond respectively to cases $k=1,2,4$.
\end{proof}

For a product of spheres $X=S^{i_1}\times \cdots\times S^{i_n}$ where $i_1\le\ldots\le i_n$ are not all equal Theorem \ref{thm:arithmetic}
yields $\ct(X)\ge i_1 + 2\, i_2 + \, \cdots\, + n i_n + (n+2)$, while for a product of spheres of same dimension we get 
$$\ct((S^i)^n)\ge \frac{(n+1)(ni+2)}{2}\,.$$ The last estimate can be sometimes improved by ad-hoc methods - see Example \ref{examples by linearly independent}.

\begin{corollary}
\label{cor:U(n)}
The covering type of unitary groups is estimated as 
$$\ct(U(n)) \geq  \frac{1}{6}(4n^3+3n^2+5n+12) \ \ \text{and} \ \ \ \ct (SU(n)) \geq  \frac{1}{6}(4n^3-3n^2+5n+6).$$
\end{corollary}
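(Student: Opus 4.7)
The plan is to apply Proposition \ref{prop:coho est} combined with the stronger form of Theorem \ref{thm:arithmetic}, using the well-known description of the cohomology rings of $U(n)$ and $SU(n)$ as exterior algebras on generators in distinct odd degrees. Since degrees are all distinct, we may use the sharper bound
$$\ct(i_1,\ldots,i_m)\ge i_1+2i_2+\cdots+mi_m+(m+2).$$

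First I would recall that $H^*(U(n);\ZZ)\cong\Lambda(x_1,x_3,\ldots,x_{2n-1})$ with $|x_{2k-1}|=2k-1$, and $H^*(SU(n);\ZZ)\cong\Lambda(x_3,x_5,\ldots,x_{2n-1})$. In each case the top product of all generators is non-trivial, so $U(n)$ admits an essential $(1,3,\ldots,2n-1)$-product and $SU(n)$ admits an essential $(3,5,\ldots,2n-1)$-product. Since in both cases the degrees are pairwise distinct, Theorem \ref{thm:arithmetic} yields the bounds
\begin{equation*}
\ct(U(n))\ge \sum_{k=1}^{n} k(2k-1) + (n+2), \qquad \ct(SU(n))\ge \sum_{k=1}^{n-1} k(2k+1) + (n+1).
\end{equation*}

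The remaining task is routine arithmetic. Using $\sum_{k=1}^{n}k=\tfrac{n(n+1)}{2}$ and $\sum_{k=1}^{n}k^2=\tfrac{n(n+1)(2n+1)}{6}$, one computes
$$\sum_{k=1}^{n} k(2k-1) = \frac{n(n+1)(4n-1)}{6} = \frac{4n^{3}+3n^{2}-n}{6},$$
so that the $U(n)$-bound becomes $\tfrac{1}{6}(4n^{3}+3n^{2}-n)+(n+2)=\tfrac{1}{6}(4n^{3}+3n^{2}+5n+12)$. A parallel computation for the shifted indices in $SU(n)$ gives
$$\sum_{k=1}^{n-1} k(2k+1) = \frac{(n-1)n(4n+1)}{6} = \frac{4n^{3}-3n^{2}-n}{6},$$
whence the $SU(n)$-bound reduces to $\tfrac{1}{6}(4n^{3}-3n^{2}+5n+6)$, matching the two claimed inequalities.

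There is no real obstacle; the only subtle point is to make sure to invoke the \emph{second} (stronger) inequality in Theorem \ref{thm:arithmetic}, which applies precisely because the degrees of the exterior generators of $H^{*}(U(n))$ and $H^{*}(SU(n))$ are distinct odd integers. The only mild bookkeeping is keeping straight the index shift between the $U(n)$ case (generator degrees $2k-1$ for $k=1,\dots,n$) and the $SU(n)$ case (generator degrees $2k+1$ for $k=1,\dots,n-1$) when evaluating the arithmetic sum.
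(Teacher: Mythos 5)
Your proposal is correct and takes essentially the same approach as the paper: identify the essential top product of the exterior generators of $H^*(U(n))$ and $H^*(SU(n))$, apply the stronger (``not all equal'') inequality of Theorem \ref{thm:arithmetic} with $n+2$ and $(n-1)+2=n+1$ as the additive constants, and carry out the same arithmetic --- your evaluation is accurate and in fact silently corrects a typo in the paper's displayed sum, where the last term should read $(n-1)\cdot(2n-1)$ rather than $(n-1)\cdot(2n+1)$. (As in the paper, the stronger bound needs at least two generators of distinct degrees, so the argument implicitly assumes $n\ge 2$ for $U(n)$ and $n\ge 3$ for $SU(n)$.)
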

\begin{proof}
The cohomology algebra $H^*(U(n))$ is the exterior algebra on generators in dimensions $1,3,\ldots,(2n-1)$, while 
$H^*(SU(n))$ is the exterior algebra on generators in dimensions $3,5, \ldots,(2n-1)$. Theorem \ref{thm:arithmetic} gives
$$\ct(U(n)) \geq \ct(1, 3,\ldots, 2n-1)\ge 1+ 2\cdot 3+3\cdot 5+\ldots+ n\cdot (2n-1) +(n+2)=$$
$$= \frac{1}{6}(4n^3+3n^2+5n+12)$$
and
$$\ct(SU(n)) \geq \ct(3,5,\ldots, 2n-1) \ge 1\cdot 3+ 2\cdot 5+\ldots+ (n-1)\cdot (2n+1) +(n+1)=$$
$$= \frac{1}{6}(4n^3-3n^2+5n+6).$$
\end{proof}
  
The LS-category of unitary groups is $\cat(U(n))=n$ and $\cat(SU(n))=n-1$ 
(see \cite[Theorem 9.47]{CLOT}), so our of the covering type estimate is 
a cubical function of the category (as compared with results from Section 2 where we 
obtained a general quadratic relation between the category and the covering type). 
\begin{remark}
The estimates of Corollary \ref{cor:ct projective spaces}  applied to the number of vertices 
of triangulation of $\RR P^n$ and $\CC P^n$ or spaces with the same cohomology algebra  reproves 
the result of  \cite{Ar-Ma}. The corresponding estimate for $\HH P^n$ was not stated  in the
literature, up to our knowledge.

The estimate of  number  of  vertices in a triangulation of $U(n)$,  or $SU(n)$,  that follow 
from   our estimate of the covering type in Corollary \ref{cor:U(n)} is   new.
\end{remark}

The computation for unitary groups can be easily extended to finite, (homotopy) associative 
$H$-spaces, i.e., spaces with a continuous product that is associative only 
up to a suitable homotopy (see \cite[Section III,4]{Whitehead}). 
In fact the $\ZZ_p$-cohomology of 
a finite associative $H$-space is given as (see \cite[Theorem III,8.7]{Whitehead}).
$$ H^*(X;\ZZ_p) \cong {\overset{n}{\underset{i=1}\otimes}}\bz_p[x_i]/(x_i^{k_i})$$
where $k_i$ is a power of 2 if $p=2$, while for $p$ odd there are two cases: $k_i=2$ if $|x_i|$ is odd, and $k_i$ is a power of $p$ if $|x_i|$ is even. Thus, for a given prime $p$ we may 
consider the corresponding structure of $ H^*(X;\ZZ_p)$ and define 
$$\ct_p(X):=\ct\big(\ \underset{k_1-1}{\underbrace{|x_1|, \,
\dots\,,|x_1|}}, \, \dots\, ,\underset{k_n-1}{\underbrace{|x_n|,
\, \dots\,,|x_n|}}\ \big) $$
A specific $H$-space may have a trivial cohomology structure with respect to some coefficient fields
and very rich with respect to other fields, yielding different values of $\ct_p(X)$. 
The crucial observation is that the covering type yields 
a uniform bound for all of them so we have the following inequality
$$ \ct(X) \geq \max_{p\ \text{prime}}\{\ct_p(X)\}$$ 
In particular, we can easily compute lower estimates for the covering type of all classical Lie
groups, since their cohomology rings are well-known.

Let us mention that for spaces whose cohomology algebra has several linearly independent generators 
in low dimensions it is possible to improve the general estimates 
of the covering type. Since the actual improvements arise only in few cases we do not attempt to develop a theory but instead illustrate this method on an example.

\begin{example}\label{examples by linearly independent}\rm
We are going to estimate $\ct(S^1 \times S^1\times S^1)$. 
Let $\mathcal{U}$ be a good cover of some $X$ that is homotopy
equivalent to $S^1\times S^1\times S^1$. Then with respect to any field coefficients we have $H^*(X)
\cong \Lambda (x,y,z)$, where $|x|=|y|=|z|=1$. 
Since $\hdim(X)=3$ there are at least $4$  open sets, say $U_1,\, U_2,\, U_3,
U_4\in\UU$ that intersect non-trivially.
The union of the remaining  elements of $\mathcal{U}$ has category at least $3$ and $\hdim$ at least $2$. By Theorem \ref{thm:cat+dim} a good cover 
of it has at least six sets, so there are also sets $U_5,\ldots,U_{10}\in\UU$. 
Observe that by Nerve theorem $H^1(U_5\cup U_6\cup U_7)$ is at most 1-dimensional, and
similarly for $H^1(U_8\cup U_9\cup U_{10})$.
Since $H^1(X)$ is $3$-dimensional, the kernel of the restriction homomorphism
$$ H^1(X) \longrightarrow H^1(U_5\cup U_6\cup U_7) \oplus H^1(U_8\cup U_9\cup U_{10})$$ 
contains a non-trivial element $u\in H^1(X)$. Moreover, the kernel of the restriction homomorphism
$$ H^1(X)\longrightarrow H^1(U_5\cup U_6 \cup U_7)$$ 
is at least $2$-dimensional so we may find in it a non-trivial element $v\in H^1(X)$ which is linearly independent from $u$. Finally, we
can choose $w\in H^1(X)$ such that the set $\{u,v,w\}$ is a basis of $H^1(X)$.

The restriction of $u$ to $U_8\cup U_9\cup U_{10}$ is trivial, so by exactness of 
the cohomology sequence of the pair there exists 
$$\bar{u}\in H^1(X, U_8\cup U_9\cup U_{10})$$ 
such that $ j^*(\bar{u}) =u$ (where $j^*$ denotes the homomorphism induced by the inclusion of $X$ 
in the pair). Similarly, one can find  
$$\bar{v} \in H^1(X, U_5\cup U_6\cup U_7), \;\; \bar{w}\in H^1(X, U_1\cup \, \dots \, \cup
U_{4})$$ satisfying  $ j^*(\bar{v}) =v$ and $ j^*(\bar{w}) =w$. Since $0\neq u\cdot v\cdot w =
j^*(\bar{u}\cdot\bar{v}\cdot \bar{w})$ and 
$\bar{u}\cdot\bar{v}\cdot \bar{w}\in H^3(X,U_1\cup\ldots\cup U_{10})$, 
we conclude that $X \neq U_1\cup U_2\, \cdots\, \cup U_{10}$, therefore $\ct(X) \geq 11$.

Note that it is an improvement of estimate of Theorem \ref{thm:arithmetic}  which gives 
$$ \ct(X) \geq \ct(1,1,1)\ge  1+2+3+4= 10$$
It is worth pointing out that for the product of four circles the two methods give the same 
estimate 
$\ct(S^1 \times S^1\times S^1\times S^1) \ge 15$, while for more than $4$ generators Theorem \ref{thm:arithmetic} 
yields a better estimate of the covering type. 
\end{example}


\section{Moore spaces}

In this section we estimate the covering type of various Moore spaces and use the results to derive estimates for related spaces. 
Recall that for every abelian group $A$ and positive integer $i$ one can construct a CW complex $X$ with 
$$\widetilde H_k(X)=\left\{\begin{array}{ll}
A & k=i\\
0 & k\ne i
\end{array}\right.
$$
Any such space is called a \emph{Moore space} of type $M(A,i)$ (cf. \cite[Example 2.40]{Hatcher}). 
If $i>1$ we normally assume that $X$ is simply connected, because then $A$ and $i$ uniquely 
determine the homotopy type of $X$ (see \cite[Example 3.34]{Hatcher}), and we may write $M(A,i)$
instead of $X$. Homotopy uniqueness fails if $i=1$ so we normally consider specific constructions
of $M(A,1)$ (see bellow). The simplest examples are wedges of spheres: 
$r$-fold wedge of $i$-dimensional spheres is a Moore space of type $M(\ZZ^r,i)$. 

By Theorem \ref{thm:min tgl} every space $X$ with $\ct(X)=n$ is homotopy equivalent to a 
subcomplex of $\Delta_{n-1}$. Therefore, for any given $n$ 
there exist only finitely many homotopy types of spaces whose covering type is equal to $n$, and we may even attempt a classification, 
at least for small values of $n$. For each $n$ there is always the trivial example of a space with $\ct(X)=n$, namely the discrete space with 
$n$ points. These are the only spaces with covering type 1 or 2. The first non-trivial example is the circle, whose covering type is 3, and 
belongs to the family of spheres $S^n$ whose covering is $\ct(S^n)=n+2$. Apart from the discrete space and the sphere, there are two other
spaces with covering type 4, namely the wedges of 2 and of 3 circles. Similarly, the spaces of covering type 5 are wedges of spheres of various 
dimensions. The number of homotopy distinct complexes increases rapidly with the covering type, but there is a reasonably complete classification
for manifolds whose covering type is at most 11 (cf. \cite[Section 5]{Datta}).

The following theorem gives the covering type of Moore spaces with free homology.

\begin{theorem}
\label{thm:ct wedge}
$\ct\big(M(\ZZ^r,i)\big)=n$, where $n$ is the minimal integer for which ${{n-1}\choose {i+1}} \ge r.$
\end{theorem}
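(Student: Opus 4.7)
The plan is to prove both inequalities $\ct(M(\ZZ^r,i)) \le n$ and $\ct(M(\ZZ^r,i)) \ge n$; by Theorem~\ref{thm:min tgl}, it suffices to work with triangulations of spaces homotopy equivalent to $M(\ZZ^r,i) = \bigvee_r S^i$.

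For the upper bound, I plan to exhibit a subcomplex $L$ of $\Delta_{n-1}$ using all $n$ vertices and homotopy equivalent to $\bigvee_r S^i$. Fix a vertex $v_1$ and let $\mathrm{St}$ denote the closed star of $v_1$ in the $i$-skeleton $(\Delta_{n-1})^{(i)}$. This star is the simplicial join $v_1 \ast L_0$ with $L_0$ the $(i-1)$-skeleton of the $(n-2)$-simplex on the remaining $n-1$ vertices; in particular $\mathrm{St}$ is contractible and contains every vertex. Among the $\binom{n-1}{i+1}$ $i$-simplices of $\Delta_{n-1}$ disjoint from $v_1$, I select $r$ of them, say $\tau_1,\dots,\tau_r$ (possible since $\binom{n-1}{i+1}\ge r$), and form $L := \mathrm{St}\cup\tau_1\cup\cdots\cup\tau_r$. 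Each $\partial\tau_j$ lies in $\mathrm{St}$ because $v_1\ast \partial\tau_j$ is a simplex of $\mathrm{St}$. The simplicial inclusion $\mathrm{St}\hookrightarrow L$ is a cofibration, so the contractibility of $\mathrm{St}$ gives $L \simeq L/\mathrm{St}$; and $L/\mathrm{St}$ is literally $\bigvee_{j=1}^r \tau_j/\partial\tau_j \cong \bigvee_r S^i$, as desired.

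For the lower bound, suppose $K$ is a simplicial complex on $N$ vertices with $|K|\simeq \bigvee_r S^i$. Then $\rank H_i(K)=r$, and since $H_i(K) = Z_i(K)/B_i(K)$ we get $r\le \rank Z_i(K)$. Viewing $K$ as a subcomplex of the full simplex $\Delta_{N-1}$ on its vertex set yields the inclusion $Z_i(K)\subseteq Z_i(\Delta_{N-1})$. Because $\Delta_{N-1}$ is contractible its reduced simplicial chain complex is exact, so a short downward induction starting from $Z_{N-1}(\Delta_{N-1})=0$ and using $\rank Z_{j-1} = \rank C_j - \rank Z_j$ with $\rank C_j = \binom{N}{j+1}$ yields $\rank Z_i(\Delta_{N-1}) = \binom{N-1}{i+1}$. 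Hence $r\le \binom{N-1}{i+1}$, and the minimality in the definition of $n$ forces $N\ge n$.

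The main subtlety is the homotopy-type identification of $L$ rather than its mere homology; the cofibration argument obtained by collapsing the contractible star handles this uniformly across all $i\ge 1$, so no special treatment is needed for the fundamental group when $i=1$ or for Hurewicz/Whitehead arguments when $i\ge 2$. Combining the two bounds with Theorem~\ref{thm:min tgl} then gives the claimed equality.
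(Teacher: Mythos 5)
Your proof is correct, and your upper bound is obtained by a genuinely different construction than the paper's. The paper first computes $H_*(\Delta_{n-1}^{(i)})$ from the truncated simplicial chain complex of the simplex, identifies the full $i$-skeleton as a Moore space $M\big(\ZZ^{\binom{n-1}{i+1}},i\big)$, and then, for $r\le\binom{n-1}{i+1}$, \emph{deletes} $\binom{n-1}{i+1}-r$ $i$-simplices chosen so that $\mathrm{im}(\partial_i)$ is unchanged; the homotopy identification of the resulting complex rests on simple connectivity together with uniqueness of simply connected Moore spaces, which is why the paper assumes $i>1$ and settles $i=1$ separately by citing \cite[Proposition 4.1]{K-W}. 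You instead build the subcomplex bottom-up: the closed star $\mathrm{St}$ of a vertex in the $i$-skeleton is a cone, hence contractible, and attaching $r$ of the $\binom{n-1}{i+1}$ $i$-simplices avoiding $v_1$ and then collapsing $\mathrm{St}$ (legitimate, since the inclusion of a subcomplex is a cofibration) exhibits the homotopy type as $\bigvee_r S^i$ directly. This buys uniformity: no case split at $i=1$, no appeal to Karoubi--Weibel, and no Hurewicz/Moore-uniqueness step --- in particular you bypass the point, treated lightly in the paper, that for $i=2$ one must check that deleting $2$-simplices while preserving $\mathrm{im}(\partial_2)$ keeps the complex simply connected. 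Your lower bound, by contrast, is essentially the paper's: the paper's assertion that the rank of $H_i$ is maximal for the full skeleton is exactly your chain $r=\rank H_i(K)\le \rank Z_i(K)\le \rank Z_i(\Delta_{N-1})=\binom{N-1}{i+1}$, with the last rank computed, as in the paper, from exactness of the reduced chain complex of the contractible simplex; your formulation via the cycle subgroup has the merit of applying to an arbitrary complex $K$ on $N$ vertices rather than only to subcomplexes of a fixed simplex, making the paper's one-line maximality claim precise. One small wording slip: $v_1\ast\partial\tau_j$ is a subcomplex (a cone), not a simplex; the correct justification that $\partial\tau_j\subseteq\mathrm{St}$ is that each facet $\sigma$ of $\tau_j$ is a face of the $i$-simplex $v_1\ast\sigma\in\mathrm{St}$. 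This does not affect the argument.
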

\begin{proof} 
The case $i=1$ is covered by \cite[Proposition 4.1]{K-W}, so we may assume $i>1$, and thus avoid complications with the fundamental group.

Let us first compute the homology of $\Delta_{n-1}^{(i)}$, the $i$-th skeleton of the $(n-1)$-dimensional simplex. The simplicial chain complex 
of $\Delta_{n-1}^{(i)}$ is obtained by truncating the simplicial 
chain complex for $\Delta_{n-1}$ at degree $i$: 
$$ C_i(\Delta_{n-1})\stackrel{\partial_i}{\longrightarrow} C_{i-1}(\Delta_{n-1}) \longrightarrow \cdots\cdots 
\longrightarrow C_0(\Delta_{n-1}) \stackrel{\partial_0}{\longrightarrow} C_{-1}=\ZZ $$
The homology of $\Delta_{n-1}$ is trivial, so the above chain complex is exact, except at the beginning. 
The rank of each $C_k(\Delta_{n-1})$ is ${n \choose {k+1}}$, and the rank of $H_i(\Delta_{n-1}^{(i)})=\ker\partial_i$ can be computed by 
exploiting the exactness: 
$$\rank(\ker\partial_i)={n \choose {i+1}}-{n \choose i}+\ldots+(-1)^i{n \choose 1}+(-1)^{i+1}
{n \choose 0}={n-1 \choose i+1}.$$
Since $\Delta_{n-1}^{(i)}$ is clearly  simply connected by the standard computation of 
the fundamental group of a simplicial complex, we conclude that it is a Moore space of type
 $M(\ZZ^{n-1 \choose i+1},i)$.

It is obvious from the definition of simplicial homology that the rank of $H_i(\Delta_{n-1}^{(i)})$ 
is maximal among all sub-complexes of $\Delta_{n-1}$. 
Therefore, if $r>{{n-1}\choose {i+1}}$, then $M(\ZZ^r,i)$ cannot be represented by a subcomplex of 
$\Delta_{n-1}$.

To show the converse, note that $\mathrm{im}(\partial_i)$ is ${{n-2}\choose {i+1}}$-dimensional, so 
we may find up to  
${{n}\choose {i+1}}-{{n-2}\choose {i+1}}={{n-1}\choose  {i+1}} $
$i$-simplices in $\Delta_{n-1}^{(i)}$ whose removal does not alter the image of $\partial_i$. 
In particular, if
$r\le{{n-1}\choose {i+1}}$ then we may remove ${{n-1}\choose {i+1}}-r$ simplices of dimension  $i$, 
so that the remaining simplices 
form a Moore space of type $M(\ZZ^r,i)$.
We conclude that $M(\ZZ^r,i)$ can be represented by a subcomplex of $\Delta_{n-1}$ if, and only 
if ${{n-1}\choose {i+1}} \ge r$, 
which proves our claim.
\end{proof}

The theorem that we have just proved allows to improve some of our previous estimates. 
Let $M$ be a $(n-1)$-connected
closed $2n$-dimensional manifold. Up to homotopy type it can be built by attaching 
a $2n$-dimensional sphere to a wedge of 
$n$-dimensional spheres. Its LS-category is 3, so by Corollary \ref{cor:cat+conn} 
every combinatorial triangulation of $M$ has at least 
$$1+2n+(n-1)+\frac{1}{2}\cdot 3\cdot 2=3n+3$$
vertices. Similarly, Poincar\'e duality implies that there are cohomology classes 
in $H^n(M)$ whose product is non zero, so by Proposition \ref{prop:coho est} and 
Theorem \ref{thm:arithmetic} the covering type 
of $M$ is bounded by 
$$\ct(M)\ge \ct(n,n)=3n+3.$$
We can obtain better estimates by taking into account the rank of $H_n(M)$.

\begin{corollary}
\label{cor:n-1/2n mfd}
Let $M$ be a $(n-1)$-connected and $2n$-dimensional closed PL-manifold. 
Then every combinatorial triangulation
of $M$ has at least  $3n+3+k$ vertices, 
where $k$ is the minimal integer for which ${n+k+1 \choose k}\ge \mathrm{rank}\, H_n(M)$.
\end{corollary}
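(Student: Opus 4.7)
The plan is to refine the argument of Corollary \ref{cor:cat+conn} by replacing the category-based estimate on the complement of a top simplex with the sharper bound provided by Theorem \ref{thm:ct wedge}, after identifying that complement as a wedge of spheres.

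Let $K$ be a combinatorial triangulation of $M$ and let $\UU$ be the good cover of $M$ by open stars of vertices. I will choose a top-dimensional simplex $\sigma \in K$ and set $U$ to be the union of the open stars of the $2n+1$ vertices of $\sigma$, which is contractible by the Nerve theorem. Let $V$ be the union of the open stars of the remaining vertices of $K$. A direct check on carrier simplices shows $V = M \setminus \bar\sigma$, which deformation retracts onto $M$ minus an interior point of $\sigma$.

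The crucial input will be the homotopy identification $V \simeq \bigvee_r S^n = M(\ZZ^r,n)$, where $r = \rank H_n(M)$. As noted just before the statement of the corollary, an $(n-1)$-connected closed $2n$-manifold has the homotopy type of a wedge of $r$ copies of $S^n$ with a single $2n$-cell attached; removing the interior of that top cell then produces the wedge. Granting this, applying Theorem \ref{thm:ct wedge} to $V$ gives $\ct(V) = m$, where $m$ is the minimal integer with $\binom{m-1}{n+1} \geq r$. Since the open stars of the vertices of $K$ not in $\sigma$ form a good cover of $V$ of cardinality $|K^{(0)}| - (2n+1)$, I obtain $|K^{(0)}| \geq 2n+1+m$.

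Writing $m = n+2+k$ with $k \geq 0$ and invoking the binomial symmetry $\binom{n+k+1}{n+1} = \binom{n+k+1}{k}$ recasts the minimality condition as: $k$ is the least nonnegative integer with $\binom{n+k+1}{k} \geq r$. This yields the advertised bound $|K^{(0)}| \geq 3n+3+k$. The main step requiring care is the homotopy identification $V \simeq M(\ZZ^r,n)$; the standard way to justify it is to appeal to the minimal CW model for simply connected manifolds, using that $H_i(M)$ vanishes for $0 < i < n$ by connectivity and for $n < i < 2n$ by Poincar\'e duality, so that the minimal model has cells only in degrees $0$, $n$, and $2n$. Then $V$, being homotopy equivalent to $M$ minus a point, is the $n$-skeleton of this model, i.e. $\bigvee_r S^n$.
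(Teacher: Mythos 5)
Your overall strategy coincides with the paper's: split off the union $U$ of open stars of the vertices of a top-dimensional simplex, identify the complementary union of open stars $V$ with a Moore space $M(\ZZ^r,n)$, apply Theorem \ref{thm:ct wedge} to the good cover of $V$ by the remaining open stars, and finish with the binomial identity $\binom{n+k+1}{n+1}=\binom{n+k+1}{k}$; that bookkeeping is correct. However, your justification of the crucial identification $V\simeq\bigvee_r S^n$ has a genuine gap. You construct a minimal CW model $X\simeq M$ with cells in dimensions $0$, $n$, $2n$ and then assert that $V$, ``being homotopy equivalent to $M$ minus a point, is the $n$-skeleton of this model.'' But removing a point is not a homotopy-invariant operation: a homotopy equivalence $M\simeq X$ gives no comparison whatsoever between $M\setminus\{pt\}$ and $X\setminus\{pt\}$ (compare $\RR^m\simeq \ast$). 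Knowing that deleting the interior of the top cell of $X$ yields $X^{(n)}$ says nothing about $M\setminus\{pt\}$ unless $M$ literally carries that CW structure, which it does not; in fact the classical argument runs in the opposite direction, first proving $M\setminus\{pt\}\simeq\bigvee_r S^n$ and only then deducing the cell model $M\simeq(\bigvee_r S^n)\cup e^{2n}$. The honest repair, which is the paper's actual argument, works with $V$ directly: for $n\ge 2$ the set $V$ is simply connected (deleting a point from a manifold of dimension $\ge 3$ does not change $\pi_1$), and the exact homology sequence of the pair $(M,V)$ --- using excision, $H_*(M,V)\cong H_*(\bar\sigma,\partial\sigma)$, or the paper's observation that $U\cap V\cong S^{2n-1}\times(0,1)$ --- shows that $\widetilde H_*(V)$ is free of rank $r=\rank H_n(M)$ concentrated in degree $n$ (freeness of $H_n(M)$ follows from Poincar\'e duality and universal coefficients). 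Uniqueness of simply connected Moore spaces then gives $V\simeq\bigvee_r S^n$.

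A second gap is that your argument tacitly assumes $n\ge 2$. The corollary also covers $n=1$, where $M$ is merely a connected closed surface: it need not be simply connected, the minimal-model argument is unavailable, and Moore spaces of type $(A,1)$ are not unique up to homotopy. The paper treats this case separately: $V$ deformation retracts onto a compact surface with boundary, which is homotopy equivalent to a wedge of circles --- with $\rank_{\ZZ_2}H_1(M;\ZZ_2)$ circles, a quantity that dominates the integral rank $\rank H_1(M)$ and hence still yields the stated bound, including for non-orientable surfaces where the integral rank would undercount. Your proposal needs at least a remark disposing of this case before it can be considered complete.
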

\begin{proof}
The first part of the argument is similar as in the proof of Corollary \ref{cor:cat+conn}. 
Let $K$ be a combinatorial
triangulation of $M$ and let $\UU$ be the cover of $M$ by open stars of vertices in $K$. 
Then there exist 
$2n+1$ vertices in $K$ that span a $2n$-dimensional simplex, and the union $U$ of corresponding 
open stars form 
the simplicial neighbourhood of that simplex. If we denote by $V$ the union of the remaining 
open stars, then 
the intersection $U\cap V$ is homeomorphic to $S^{2n-1}\times (0,1)$. 

If $n>1$ then the exactness of the homology sequence of the pair $(M,V)$ immediately implies 
that $V$ is 
a Moore space of type $M(\ZZ^r,n)$ where $r=\rank (H_n(M))$. 
If $n=1$, then we observe that $V$ can be deformed to a surface with boundary, and these are  
well-known to 
be homotopy equivalent to wedges of circles. In that case $V$ is a Moore space of type 
$M(\ZZ^r,1)$ where 
$r=\rank_{\ZZ_2}(H_1(M;\ZZ_2))$ (we use $\ZZ_2$-coefficients to obtain a statement that 
is valid for both 
orientable and non-orientable surfaces). 

By Theorem \ref{thm:ct wedge} $V$ is the union of at least $n+k+2$ open stars of vertices in $K$, where 
$k$ is the minimal integer for which ${n+k+1 \choose k}={n+k+1 \choose n+1}\ge r$. We conclude that $K$ 
has at least $(2n+1)+(n+k+2)=3n+3+k$ vertices, where $k$ is defined as above.
\end{proof}

%
%

\begin{corollary}\label{ct of product of two spheres}
\label{cor:prod 2 spheres}
Let $m\leq n$. Then we have
$$ \ct(S^m\times S^n) \;\geq \; m+2n+4$$
\end{corollary}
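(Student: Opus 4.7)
The plan is to combine Proposition~\ref{prop:coho est} with Theorem~\ref{thm:arithmetic}: since the two sphere factors give a nontrivial product $xy\in H^{m+n}(S^m\times S^n)$, we have $\ct(S^m\times S^n)\geq \ct(m,n)$. When $m<n$ the pair $(m,n)$ is not all equal and the stronger half of Theorem~\ref{thm:arithmetic} immediately yields $\ct(m,n)\geq m+2n+4$, which settles this case. The delicate case is $m=n$, where Theorem~\ref{thm:arithmetic} only gives $\ct(n,n)\geq 3n+3$, one vertex short. To recover the missing vertex I would exploit the fact that $H^n(S^n\times S^n)$ has rank $2$ rather than rank $1$.

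So assume $m=n$ and let $\UU$ be a good cover of cardinality $\ct(X)$ on some $X\simeq S^n\times S^n$. Since $\hdim X=2n$, fact A) produces $2n+1$ elements $U_1,\dots,U_{2n+1}\in\UU$ whose common intersection is nonempty; by fact B) their union $U$ is contractible. Let $V$ be the union of the remaining $N:=\ct(X)-(2n+1)$ elements of $\UU$. The plan is to show that $V$ by itself already forces $N\geq n+3$.

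The key step is to show that the restriction $i_V^{\,*}\colon H^n(X)\to H^n(V)$ is \emph{injective}. The argument is the relative-cohomology trick from Lemma~\ref{lem:subproducts}: any $\alpha\in\ker i_V^{\,*}$ lifts to some $\bar\alpha\in H^n(X,V)$, any $\beta\in H^n(X)$ lifts to some $\bar\beta\in H^n(X,U)$ (because $U$ is contractible), and their cup product lies in $H^{2n}(X,U\cup V)=0$, so $\alpha\cdot\beta=0$ for every $\beta\in H^n(X)$. But on $S^n\times S^n$ the cup pairing $H^n\otimes H^n\to H^{2n}$ is perfect by K\"unneth (writing $H^n$ in the basis $\{x,y\}$, $x^2=y^2=0$ while $xy$ generates $H^{2n}$), so $\alpha=0$. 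Hence $\mathrm{rank}\,H^n(V)\geq 2$, and by universal coefficients (the Ext term contributes no rank) $\mathrm{rank}\,H_n(V)\geq 2$.

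By the Nerve theorem $V$ is homotopy equivalent to a subcomplex of $\Delta_{N-1}$. Exactly as in the proof of Theorem~\ref{thm:ct wedge}, the maximum rank of $H_n$ over all subcomplexes of $\Delta_{N-1}$ equals $\binom{N-1}{n+1}$, achieved by the $n$-skeleton $\Delta_{N-1}^{(n)}$. Requiring $\binom{N-1}{n+1}\geq 2$ forces $N\geq n+3$, and therefore $\ct(X)\geq(2n+1)+(n+3)=3n+4$. The main obstacle is verifying the injectivity of $i_V^{\,*}$, which relies entirely on the Poincar\'e-duality/perfect-pairing structure particular to $S^n\times S^n$; once that is in hand, the remaining bookkeeping is a clean appeal to the Moore-space calculation of Theorem~\ref{thm:ct wedge}.
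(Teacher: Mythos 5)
Your proof is correct; for $m<n$ it coincides with the paper's argument (Proposition \ref{prop:coho est} together with the second statement of Theorem \ref{thm:arithmetic}), but for $m=n$ you take a genuinely different route. The paper invokes Corollary \ref{cor:n-1/2n mfd}: there the good cover comes from open stars of a combinatorial triangulation of a PL-manifold, so $U\cap V\cong S^{2n-1}\times(0,1)$, the homology sequence of the pair $(M,V)$ identifies $V$ as a Moore space $M(\ZZ^2,n)$, and Theorem \ref{thm:ct wedge} then forces at least $n+3$ further vertices. You instead work with an \emph{arbitrary} good cover and replace the Moore-space identification by two weaker facts that suffice: injectivity of $i_V^*$ on $H^n(X)$ — the relative-cup-product argument of the lemma preceding Lemma \ref{lem:subproducts}, combined with the unimodularity of the K\"unneth pairing on $S^n\times S^n$, correctly gives $\alpha\cdot\beta=0$ for all $\beta$ whenever $i_V^*\alpha=0$, hence $\alpha=0$ — and the rank bound $\rank H_n(K)\le\rank\ker\partial_n=\binom{N-1}{n+1}$, which does hold for \emph{every} subcomplex $K\le\Delta_{N-1}$, not just skeleta, since $C_n(K)\subseteq C_n(\Delta_{N-1})$ and the boundary maps are restrictions; requiring this to be $\ge 2$ gives $N\ge n+3$ and the total $3n+4$ as you compute. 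The trade-off is instructive: the paper's route yields the full Moore-space structure of $V$ and hence Corollary \ref{cor:n-1/2n mfd} for arbitrary rank, but it is tied to open-star covers of PL-manifolds, so strictly speaking it bounds $\Delta^{PL}(S^n\times S^n)$ rather than $\ct(S^n\times S^n)$; your argument applies to any good cover of any space homotopy equivalent to $S^n\times S^n$ and therefore bounds the covering type directly, which is exactly what the stated corollary asserts. Indeed, since every $(n-1)$-connected closed $2n$-manifold has free middle cohomology with unimodular intersection pairing, your pairing argument (with $\rank H_n(M)=r$ in place of $2$) would upgrade Corollary \ref{cor:n-1/2n mfd} itself from a bound on $\Delta^{PL}$ to a bound on $\ct$.
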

\begin{proof}
For $m<n$ the estimate follows by Theorem \ref{thm:arithmetic}, and for $m=n$ by the previous Corollary 
and the observation that $k=n+2$ is the minimal integer for which ${k \choose {n+1}}\ge \mathrm{rank}\, H_n(S^n\times S^n)=2$.
\end{proof}
We must add that the corresponding  estimate of number of vertices of a triangulation of a combinatorial manifold which is homotopy equivalent to $S^m \times S^n$ which follows from Corollary \ref{ct of product of two spheres} was shown in \cite{Bre-Kuh} (see also \cite{Bag-Dat}).

Moore spaces for an arbitrary abelian group are usually constructed as wedges of Moore spaces of cyclic groups. It is therefore important 
to have estimates of the covering type of a wedges of spaces but one should expect some irregular behaviour. For example, by \cite[Proposition 4.1]{K-W}
we have the following relations 
$$ \ct(S^1) < \ct(S^1\vee S^1)= \ct(S^1\vee S^1\vee S^1) < \ct(S^1\vee S^1\vee S^1\vee S^1).$$

We can derive an upper estimate for the covering type of a wedge as follows. 
Let $m=\ct(X)$ and $n=\ct(Y)$. Then there are simplicial complexes
$K\le\Delta_{m-1}$ and $L\le\Delta_{n-1}$, such that $X\simeq |K|$ and $Y\simeq |L|$. Clearly, 
$K\vee L$ can be realized as a one-point union of $K$ and $L$ and is 
thus a subcomplex of $\Delta_{m+n-2}$. That estimate can be improved by gluing $K$ and $L$ 
along bigger subcomplexes. Indeed, if $\hdim(X)=k$ and $\hdim(Y)=l$, then 
$K$ and $L$ contain respectively a $k$-dimensional simplex $\sigma\le K$ and a $l$-dimensional 
simplex $\tau\le L$. If we assume that $k\le l$ and we glue together $K$ and $L$  
so that $\sigma$ is identified with a face of $\tau$, then the resulting complex is a subcomplex 
of $\Delta_{m+n-k-2}$, while its geometric realization is homotopy equivalent
to $X\vee Y$. Thus we have proved the following estimate.
\begin{proposition}\label{prop:ct wedge}
$$\ct(X \vee Y)\leq \ct(X) + \ct (Y) - \min \{\hdim X ,\, \hdim Y\}
-1 \,.$$
\end{proposition}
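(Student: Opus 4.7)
The statement is essentially justified by the discussion in the paragraph preceding it, so the plan is to turn that sketch into a formal argument. My plan is as follows.

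First I would invoke Theorem \ref{thm:min tgl} to choose simplicial complexes $K$ and $L$ with $|K|\simeq X$ and $|L|\simeq Y$ having respectively $m=\ct(X)$ and $n=\ct(Y)$ vertices. Let $k=\hdim(X)$ and $l=\hdim(Y)$, and without loss of generality assume $k\leq l$. Since $\dim K\geq \hdim(X)=k$ and $\dim L\geq \hdim(Y)=l$, the complex $K$ contains a $k$-dimensional simplex $\sigma$ (possibly as a face of a higher-dimensional simplex), and $L$ contains an $l$-simplex $\tau$; every $l$-simplex has a $k$-face, so I can choose a specific $k$-face $\sigma'$ of $\tau$ and a simplicial isomorphism $\sigma\cong\sigma'$.

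Next I would form the pushout simplicial complex $M=K\cup_{\sigma\sim\sigma'} L$, obtained by identifying $\sigma\subseteq K$ with $\sigma'\subseteq L$ along this isomorphism. Counting vertices: $K$ contributes $m$, $L$ contributes $n$, and the $k+1$ vertices of $\sigma$ are identified with the $k+1$ vertices of $\sigma'$, so
\[
\#M^{(0)} = m+n-(k+1)=\ct(X)+\ct(Y)-\min\{\hdim X,\hdim Y\}-1.
\]

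Then I would show $|M|\simeq X\vee Y$. This is the step that needs a small argument rather than mere counting: since $|\sigma|$ is contractible and is a CW subcomplex of both $|K|$ and $|L|$, the standard fact that pushouts of cofibrations along contractible subspaces agree up to homotopy with pushouts over a point (equivalently, $|K|\cup_{|\sigma|}|L|\simeq (|K|/|\sigma|)\vee (|L|/|\sigma|)\simeq |K|\vee |L|\simeq X\vee Y$) yields the conclusion. Applying Theorem \ref{thm:min tgl} once more to $M$ then gives
\[
\ct(X\vee Y)\leq \Delta^{\simeq}(X\vee Y)\leq \#M^{(0)}=\ct(X)+\ct(Y)-\min\{\hdim X,\hdim Y\}-1.
\]
The only nontrivial step is verifying that $|M|$ has the homotopy type of $X\vee Y$; the vertex count and the choice of $\sigma,\tau$ are routine. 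No obstacle is expected beyond this, since contractibility of the identified simplex immediately reduces the pushout to a wedge.
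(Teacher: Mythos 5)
Your proposal is correct and follows essentially the same route as the paper: the authors likewise realize $X$ and $Y$ as complexes $K$ and $L$ on $\ct(X)$ and $\ct(Y)$ vertices via Theorem \ref{thm:min tgl}, choose a $k$-simplex $\sigma\le K$ and glue it to a face of an $l$-simplex $\tau\le L$, and conclude by counting vertices. The only difference is that you spell out the homotopy equivalence $|K|\cup_{|\sigma|}|L|\simeq X\vee Y$ (collapsing the contractible identified simplex along cofibrations), a step the paper simply asserts, so your write-up is if anything slightly more complete.
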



Karoubi and Weibel \cite[Theorem 7.1]{K-W} have shown that the suspension of a space can increase 
its covering type at most by one.
However, it happens frequently that  the covering type drops after suspension. 

\begin{example}
We have shown in Corollary \ref{cor:prod 2 spheres} that $\ct(S^m\times S^n)\ge m+2n+4$. On the 
other hand, after suspension a product of spheres
splits as a wedge of spheres $\Sigma(S^m\times S^n)\simeq S^{m+1}\vee S^{n+1}\vee S^{m+n+1}$.
 Therefore, by Proposition \ref{prop:ct wedge}
$\ct\big(\Sigma(S^n\times S^n)\big)\le m+n+5$, so the covering type of the suspension of $S^m\times S^n$ 
is smaller at least by $n-1$ than the covering type 
of $S^m\times S^n$. Indeed, the result is not surprising at all if we have in mind our estimates 
based on the LS-category and the cohomology products,
and recall that the category of a suspension is always equal to 2, and that the cohomology products 
in a suspension  are always trivial.
\end{example}

Recall that we may construct Moore space whose homology is a finite cyclic group as follows. 
Let $X$ be the 2-dimensional CW-complex obtained by attaching a 2-cell to a circle by a map 
of degree $k$. 
The computation of cellular homology shows that $X$ is a Moore space of type $M(\ZZ_k,1)$.
Moreover, the $(i-1)$-fold suspension of $X$ is clearly a Moore space of type $M(\ZZ_k,i)$. 
To avoid repetition in the statements of the following two results we adopt the convention 
that $M(A,1)$ denotes the specific construction (wedge of circles and spaces of type $M(\ZZ_k,1)$ 
that we have just described).

\begin{proposition}
\label{prop:cyclic Moore}
$$i+3 \leq \ct(M(\bz_k, i)) \leq i+3k \,.$$
\end{proposition}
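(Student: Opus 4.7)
The plan is to treat the two inequalities separately, with the upper bound reduced via suspension to the base case $i = 1$.

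For the lower bound, I would observe that when $k \geq 2$, the space $M(\bz_k,i)$ has reduced homology $\widetilde H_i = \bz_k$, a non-trivial torsion group. Since the top-dimensional cellular homology of any finite CW-complex is free abelian (being a subgroup of a free chain group), no $i$-dimensional CW-complex can realize $M(\bz_k,i)$, so $\hdim(M(\bz_k,i)) \geq i+1$. Combining this with the elementary Karoubi--Weibel estimate $\ct(X) \geq \hdim(X) + 2$ for non-contractible $X$ (\cite[Proposition~3.1]{K-W}) yields $\ct(M(\bz_k,i)) \geq (i+1) + 2 = i+3$.

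For the upper bound, I would first establish the base case $\ct(M(\bz_k,1)) \leq 3k+1$ by constructing an explicit simplicial complex $K_k$ on $3k+1$ vertices with $|K_k| \simeq M(\bz_k,1)$. A natural starting point is a cone on a triangulated $3k$-cycle: one apex $c$ together with rim vertices $v_0,\ldots,v_{3k-1}$ and cone triangles $(c,v_i,v_{i+1})$, which uses exactly $3k+1$ vertices. One then has to adjoin carefully chosen additional 2-simplices so that the rim $3k$-cycle becomes homotopic to a $3$-cycle traversed $k$ times, yielding $\pi_1 = \bz_k$ and trivial higher homology. For $i \geq 2$, I would invoke the identification $M(\bz_k,i) \simeq \Sigma^{i-1} M(\bz_k,1)$ together with the Karoubi--Weibel suspension bound $\ct(\Sigma X) \leq \ct(X)+1$ (\cite[Theorem~7.1]{K-W}); iterating $i-1$ times gives
$$\ct(M(\bz_k,i)) \leq \ct(M(\bz_k,1)) + (i-1) \leq (3k+1) + (i-1) = 3k + i.$$

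The main obstacle is the explicit combinatorial construction of $K_k$. The naive approach --- identifying the $3k$ rim vertices of the coned disk with the $3$ vertices of a target triangle via the degree-$k$ map --- produces only a $\Delta$-complex and not a genuine simplicial complex, because several distinct cone 2-cells collapse to the same vertex triple after the identification. Arranging the simplicial structure so as to avoid such vertex-set collisions, while retaining the correct homotopy type and keeping the vertex count at $3k+1$, is where the real combinatorial work lies.
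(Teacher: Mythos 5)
Your lower bound and your suspension step are exactly the paper's argument: $\hdim M(\bz_k,i)=i+1$ (your torsion argument is the right justification) combined with $\ct(X)\ge\hdim(X)+2$ gives $i+3$, and iterating $\ct(\Sigma X)\le\ct(X)+1$ from \cite[Theorem 7.1]{K-W} reduces everything to the case $i=1$. The problem is the base case. You assert $\ct(M(\bz_k,1))\le 3k+1$ via an explicit simplicial complex on $3k+1$ vertices, but you never produce it: you sketch a cone on a $3k$-cycle, note that the naive degree-$k$ identification only yields a $\Delta$-complex rather than a simplicial complex, and then explicitly defer the ``real combinatorial work'' of adjoining the extra $2$-simplices without vertex-set collisions. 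That deferred step \emph{is} the content of the upper bound; as written, the inequality $\ct(M(\bz_k,1))\le 3k+1$ is unproven, so the whole upper estimate $\ct(M(\bz_k,i))\le i+3k$ hangs on a construction you have not carried out. (It is not obvious that your particular scheme --- keeping the apex and all $3k$ rim vertices and adding triangles inside the rim --- can be completed; any added triangle uses three rim vertices, and you must check that the resulting complex kills exactly the right relation in $\pi_1$ and creates no $H_2$.)

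The paper avoids this combinatorics entirely: $M(\bz_k,1)$ is the mapping cone of the degree-$k$ self-map of $S^1$, and Karoubi--Weibel's mapping cone estimate \cite[Theorem 7.2]{K-W}, together with $\ct(S^1)=3$, yields $\ct(M(\bz_k,1))\le 1+3k$ directly; suspension then gives $\ct(M(\bz_k,i))\le i+3k$ just as in your proposal. So the fix is simple: replace your unfinished construction with the citation of \cite[Theorem 7.2]{K-W} (or actually complete the triangulation, which is genuine extra work and not needed for the stated bound). One minor point common to both arguments: the lower bound, and your torsion argument for it, tacitly assume $k\ge 2$, since $M(\bz_1,i)$ is contractible with covering type $1$.
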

\begin{proof}
The lower estimate follows immediately from the observation  that $\hdim M(\ZZ_k,i)=i+1$. 
For the upper estimate recall that $M(\ZZ_k,1)$ is the mapping cone of a degree $k$ map between 
circles.
By \cite[Theorem 7.2]{K-W} and the fact that $\ct(S^1)=3$ we obtain $\ct(M(\ZZ_k,1))\le 1+3k$.
Since $M(\ZZ_k, i)$ can be obtained as a $(i-1)$-fold suspension of $M(\ZZ_k, 1)$ 
\cite[Theorem 7.1]{K-W} implies $\ct(M(\ZZ_k,1))\le i+3k$. 
\end{proof}
 
By combining Theorem \ref{thm:ct wedge}, Proposition \ref{prop:ct wedge} and 
Proposition \ref{prop:cyclic Moore} we obtain an upper
bound for the covering type of Moore spaces with  finitely generated homology. In the next 
result we will assume that there is at least some torsion
in homology, since the torsion-free case is settled by Theorem \ref{thm:ct wedge}. 

\begin{corollary} Assume $n \ge 1$. Then
$$\ct\big(M(\ZZ^r\oplus\ZZ_{k_1}\oplus\ldots\oplus \ZZ_{k_n},i)\big)\le i+k_0+
3(k_1+\ldots+k_n)-2(n-1), $$
where $k_0=\min\big\{k\ge 0\ \big|\  {i+k \choose i+1}\ge r\big\}$.
\end{corollary}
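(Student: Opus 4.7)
The plan is to realize the Moore space as an iterated wedge of simpler Moore spaces and then iteratively apply the upper bound of Proposition \ref{prop:ct wedge}, using Theorem \ref{thm:ct wedge} for the free summand and Proposition \ref{prop:cyclic Moore} for each cyclic torsion summand.

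First I would observe that (under the convention adopted just before Proposition \ref{prop:cyclic Moore}) the Moore space $M(\ZZ^r\oplus\ZZ_{k_1}\oplus\ldots\oplus\ZZ_{k_n},i)$ is built as the wedge
\[
M(\ZZ^r,i)\vee M(\ZZ_{k_1},i)\vee\ldots\vee M(\ZZ_{k_n},i).
\]
Theorem \ref{thm:ct wedge} immediately yields $\ct(M(\ZZ^r,i))\le i+k_0+1$ (with equality when $r\ge 1$), because $k_0$ is precisely the shift $k=m-1-i$ that converts the condition $\binom{m-1}{i+1}\ge r$ into $\binom{i+k}{i+1}\ge r$. Proposition \ref{prop:cyclic Moore} gives $\ct(M(\ZZ_{k_j},i))\le i+3k_j$ for every $j$. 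I would also record the homotopy dimensions I will plug into Proposition \ref{prop:ct wedge}: $\hdim M(\ZZ^r,i)=i$ (a wedge of $i$-spheres) and $\hdim M(\ZZ_{k_j},i)=i+1$ (a single $(i{+}1)$-cell is attached to an $i$-skeleton).

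Next I would wedge the torsion parts one by one. Starting from $\ct(M(\ZZ_{k_1},i))\le i+3k_1$, each additional factor $M(\ZZ_{k_j},i)$ contributes $i+3k_j$ to the cardinality but, by Proposition \ref{prop:ct wedge}, allows us to subtract $\min\{\hdim,\hdim\}+1=(i+1)+1=i+2$. A straightforward induction on $n$ therefore gives
\[
\ct\bigl(M(\ZZ_{k_1},i)\vee\cdots\vee M(\ZZ_{k_n},i)\bigr)\le i+3(k_1+\cdots+k_n)-2(n-1).
\]
Finally I would wedge in $M(\ZZ^r,i)$. Since its homotopy dimension is $i$, while that of the torsion wedge is $i+1$, Proposition \ref{prop:ct wedge} permits us to subtract $i+1$, giving
\[
(i+k_0+1)+\bigl(i+3(k_1+\cdots+k_n)-2(n-1)\bigr)-i-1=i+k_0+3(k_1+\cdots+k_n)-2(n-1),
\]
which is exactly the claimed bound.

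The only delicate point is the degenerate case $r=0$, where $M(\ZZ^r,i)$ is a point and one should simply omit the final wedge step; the convention $k_0=0$ then makes the stated bound coincide with the estimate obtained after the inductive torsion computation, so no separate argument is needed. Beyond this, the work is essentially a linear book-keeping, so I do not expect a real obstacle; the main point worth double-checking is that the Moore space $M(A,i)$ genuinely splits (up to homotopy in our chosen construction) as the wedge of the cyclic pieces, which for $i\ge 2$ follows from homotopy uniqueness of simply connected Moore spaces and for $i=1$ is built into the convention introduced before Proposition \ref{prop:cyclic Moore}.
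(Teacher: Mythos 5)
Your proposal is correct and follows essentially the same route as the paper: model the Moore space as the wedge $M(\ZZ^r,i)\vee M(\ZZ_{k_1},i)\vee\ldots\vee M(\ZZ_{k_n},i)$, bound the free summand via Theorem \ref{thm:ct wedge}, each cyclic summand via Proposition \ref{prop:cyclic Moore}, and combine them by iterating Proposition \ref{prop:ct wedge} with the correct homotopy dimensions ($i$ for the free part, $i+1$ for the torsion parts). The only difference is cosmetic---the paper wedges the free part with the first cyclic summand and then adds the remaining cyclic pieces (each contributing $3k_j-2$), while you assemble the torsion wedge first and attach the free part last---and your handling of the degenerate case $r=0$ matches the paper's observation that $k_0=0$ makes the formula valid there as well.
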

\begin{proof}
We are going to estimate the covering type of 
$$M(\ZZ^r,i)\vee M(\ZZ_{k_1},i)\vee\ldots\vee M(\ZZ_{k_n},i),$$
which can be taken as a model for $M(\ZZ^r\oplus\ZZ_{k_1}\oplus\ldots\oplus \ZZ_{k_n},i)$.
By Theorem \ref{thm:ct wedge} we have
$\ct(M(\ZZ^r,i))=i+k_0+1$. Since $M(\ZZ^r,i)$ is $i$-dimensional and $M(\ZZ_{k_1},i)$ is 
$(i+1)$-dimensional, 
Proposition \ref{prop:ct wedge} and Proposition \ref{prop:cyclic Moore} yield
$$\ct\big(M(\ZZ^r,i)\vee M(\ZZ_{k_1},i)\big)\le (i+k_0+1)+(i+3k_1)-i-1=i+k_0+3k_1,$$
If we add more finite cyclic summands both terms in the wedge are $(i+1)$-dimensional, 
and so the covering type increases 
at most by $3 k_j-2$ at each step. Observe that the formula is valid even if $r=0$, because in 
that case $k_0=0$.
\end{proof}

\section*{Acknowledgment}

We are very grateful to the referee for the careful reading of the paper and for
many valuable suggestions and comments which helped us to correct our arguments and 
improve considerably the manuscript.


\begin{thebibliography}{99}
\bibitem{Ar-Ma}  P.~Arnoux,  A.~Marin,
\emph{The K{\"u}hnel triangulation of the complex projective plane
from the view-point of complex cristallography} (part II), Memoirs of Fac. Sc., Kyushu
Univ. Ser. A 45 (1991), 167--244.
\bibitem{B-M}
E.~Borghini, E.G.~Minian, The covering type of closed surfaces and minimal triangulations,  
arXiv:1712.02833, 2017, 6 pages.
\bibitem{Bre-Kuh} U.~Brehm,  W.~K{\"u}hnel, \emph{Combinatorial manifolds with few vertices}, 
Topology 26, 465--473 (1987).
\bibitem{Bag-Dat} B.~Bagchi, B.~Datta, 
\emph{Minimal triangulations of sphere bundles over the circle}, J. Combin. Theory Ser. A 115 
(2008), no. 5, 737--752.
\bibitem{CLOT}
O.~Cornea, G.~Lupton, J.~Oprea, D.~Tanre, \emph{Lusternik-Schnirelmann category}, 
Mathematical Surveys and Monographs, vol. 103, (American Mathematical Society, 2008).
\bibitem{Datta}
B.~Datta, \emph{Minimal triangulations of manifolds}, arXiv:math/0701735.
\bibitem{DKR}
A.~Dranishnikov, M.~Katz and Y.~Rudyak, Small values of the Lusternik--Schnirelmann category 
for manifolds, \emph{Geometry \& Topology} {\bf 12} (2008) 1711-1727.
\bibitem{Hatcher}
A.~Hatcher, \emph{Algebraic Topology}, (Cambridge University Press, 2002).
\bibitem{K-W}
M.~Karoubi, C.~Weibel, \emph{On the covering type of a space}, 
L'Enseignement Math. {\bf 62} (2016), 457--474.
\bibitem{Lutz}
F.~Lutz, \emph{Triangulated Manifolds with Few Vertices: Combinatorial Manifolds}, 
arXiv:math/0506372.
\bibitem{Whitehead}
G.~W.~Whitehead, \emph{Elements of Homotopy Theory}, Graduate Texts in Mathematics, vol. 61,
(Springer, Berlin, 1978).
\end{thebibliography}
\end{document}